%To be removed at the final step - you may add the similar commands
%\newcommand{\przemek}[1]{{\color{brown} Przemek:\ #1}}
%\newcommand{\js}[1]{{\color{teal}js: #1}}

\documentclass[12pt]{amsart}
%%%%%%%%%%%%%%%%%%%%%%%%%%%%%%%%%%%%%%%%%%%%%%%%%%%%%%%%%%%%%%%%%%%%%%%%%%%%%%%%%%%%%%%%%%%%%%%%%%%%%%%%%%%%%%%%%%%%%%%%%%%%%%%%%%%%%%%%%%%%%%%%%%%%%%%%%%%%%%%%%%%%%%%%%%%%%%%%%%%%%%%%%%%%%%%%%%%%%%%%%%%%%%%%%%%%%%%%%%%%%%%%%%%%%%%%%%%%%%%%%%%%%%%%%%%%
\usepackage{amsfonts}
\usepackage{amssymb}
\usepackage{tikz}
\usepackage{graphicx}
\usepackage[letterpaper, left=2.5cm, right=2.5cm, top=2.5cm,
bottom=2.5cm,dvips]{geometry}
\setcounter{MaxMatrixCols}{10}
\usepackage[normalem]{ulem}

%\przemek - some theorem like enviroments will have common enumeration
\newtheorem{theorem}{Theorem}
\theoremstyle{plain}

\newtheorem{claim}[theorem]{Claim}

\newtheorem{conjecture}[theorem]{Conjecture}
\newtheorem{corollary}[theorem]{Corollary}

\newtheorem{lemma}[theorem]{Lemma}

\newtheorem{proposition}[theorem]{Proposition}

\numberwithin{equation}{section}

\begin{document}
\title[Localization game on geometric and planar graphs]{Localization game on geometric and planar graphs}
\author[B. Bosek]{Bart\l omiej Bosek}
\address{Theoretical Computer Science Department, Faculty of Mathematics and Computer Science, Jagiellonian
University, 30-348 Krak\'{o}w, Poland}
\email{bosek@tcs.uj.edu.pl}

\author[P. Gordinowicz] {Przemys\l aw Gordinowicz}
\address{Institute of Mathematics, Lodz University of Technology, \L{}\'od\'z, Poland}
\email{pgordin@p.lodz.pl}

\author[J. Grytczuk]{Jaros\l aw Grytczuk}
\address{Faculty of Mathematics and Information Science, Warsaw University
	of Technology, 00-662 Warsaw, Poland}
\email{j.grytczuk@mini.pw.edu.pl}

\author[N. Nisse]{Nicolas Nisse}
\address{Universit\'e C\^ote d'Azur, Inria, CNRS, I3S, France}
\email{nicolas.nisse@inria.fr}

\author[J. Sok\'{o}\l ]{Joanna Sok\'{o}\l }
\address{Faculty of Mathematics and Information Science, Warsaw University
	of Technology, 00-662 Warsaw, Poland}
\email{j.sokol@mini.pw.edu.pl}
\author[M. \'{S}leszy\'{n}ska-Nowak]{Ma\l gorzata \'{S}leszy\'{n}ska-Nowak}

\address{Faculty of Mathematics and Information Science, Warsaw University
	of Technology, 00-662 Warsaw, Poland}
\email{m.sleszynska@mini.pw.edu.pl}

\thanks{This work was supported by The National Center for Research (J. Grytczuk, J. Sok\'{o}\l, M. \'{S}leszy\'{n}ska-Nowak) and
	Development under the project PBS2/B3/24/2014, by the ANR project Stint (ANR-13-BS02-0007) (N. Nisse) and the associated Inria team AlDyNet (N. Nisse)}

\begin{abstract}
The main topic of this paper is motivated by a localization problem in
cellular networks. Given a graph $G$ we want to localize a
walking agent by checking his distance to as few vertices as possible. The
model we introduce is based on a pursuit graph game that resembles the famous Cops and Robbers game. It can be considered as a game theoretic variant of the \emph{metric dimension} of a
graph. We provide upper  bounds on the related graph invariant $\zeta (G)$, defined as the least number of cops needed to localize the robber on a graph $G$, for several classes of graphs (trees, bipartite graphs, etc). Our main result is that, surprisingly, there exists planar graphs of treewidth $2$ and unbounded $\zeta (G)$. On a positive side, we prove that $\zeta (G)$ is bounded by the pathwidth of $G$. We then show that the algorithmic problem of determining $\zeta (G)$ is NP-hard in graphs with diameter at most $2$. Finally, we show that at most one cop can approximate (arbitrary close) the location of the robber in the Euclidean plane.
\end{abstract}

\maketitle

\section{Introduction}

Among dozens of games studied in graph theory, the game of \emph{Cops and
	Robbers}~\cite{BonatoNowakowski} is one of the most intriguing. Two players, the cops and the
robber, occupy vertices of a graph $G$, and move along edges to neighboring
vertices or remain on their current vertex. The cops move first by occupying
a set of vertices. The robber then chooses a vertex to occupy, and the
players move in alternate rounds. The game is played with perfect
information, so the players see each others' moves. The cops win if they can
capture the robber by moving to a vertex the robber occupies; otherwise, the
robber wins. The \emph{cop number} of a graph $G$, denoted by $c(G)$, is the
minimum number of cops needed to win in $G$.

The game was introduced independently by Quilliot \cite{Quillot} and
Nowakowski and Winkler \cite{NowakowskiWinkler}. Since then it gained lots
of research activity resulting in many deep results, unexpected connections,
and challenging open problems. Also numerous variants motivated by
applications in diverse areas of mathematics and computer science have been
considered (see \cite{AignerFromm}, \cite{BonatoNowakowski}, \cite{Fomin}, 
\cite{Seymour}).

In this paper, we study another variant inspired by localization problems in
wireless networks. The recent growing popularity of mobile devices (iphones,
smartphones, etc.) stimulated lots of technological invention as well as
theoretical research for solutions of real-life localization tasks (see \cite%
{Bahl}, \cite{Gu}, \cite{Olsen}). In one of many possible approaches, a
network is modeled as a graph with radio signal receivers (such as Wi-Fi
access points) located at some vertices. The strength of the signal from a
mobile phone is proportional to its distance to particular receivers. So,
the information on the actual position of the mobile phone in the network is
only partial. Moreover, the mobile phone holder may walk along the network
changing his position in time. The task is to find him precisely at some
point by a procedure with prescribed efficiency.

Motivated by this scenario, we consider the following metric version of the
Cops and Robbers game. Let $G=(V,E)$ be a simple connected undirected graph and let $k\geq 1$ be a
fixed integer. 
The {\it localization game} involves two Players: the {\it Cop-player} (playing with a team of $k$ {\it cops}) and the {\it Robber-player} (the {\it robber}), and proceeds as follows. In the first turn, the robber chooses a vertex $r \in V$ unknown to the Cop-player (in the localization game, the robber is {\it a priori} ``invisible"). Then, at every turn, first the Cop-player picks (or probes) $k$ vertices $B=\{v_{1},v_{2},\ldots ,v_{k}\} \in V^k$ and, in return, gets the vector $D(B)=(d_{1},d_{2},\ldots ,d_{k})$ where $d_{i}=d_{G}(r,v_{i})$ is the distance (in $G$) from $r$ to $v_i$ for every $i=1,2,\ldots ,k$. If the Cop-player can determine the position of the Robber-player from $D(B)$ (i.e., $r$ is uniquely defined by $D(B)$ and the knowledge that the Cop-player has from previous turns), then the Cop-player wins. Note that $r$ is not required to be in $B$. Otherwise, the Robber-player may move to a neighboring node $r' \in N[r]$\footnote{In this paper, $N(v)$ denotes the set of neighbors of a vertex $v$ and $N[v]=N(v) \cup \{v\}$.} (unknown by the Cop-player) and then, during the next turn, the Cop-player can probe another set (possibly the same) of $k$ vertices, and if the robber is not located the robber may move to a neighbor, and so on. The cops win if they can precisely locate the position (the occupied vertex) of the robber after a finite number of turns (before the possible move of the robber). The robber wins otherwise. Let $\zeta (G)$ denote the
least integer $k$ for which the cops have a winning strategy whatever be the strategy of the robber (that is, we consider the worst case when the Robber-player {\it a priori} knows the whole strategy of the Cop-player). The graph-parameter $\zeta (G)$ is called the \emph{localization number} of a graph $G$. Notice that this
parameter is well defined since the inequality $\zeta (G)\leq \left\vert
V(G)\right\vert $ holds obviously. Note also that, by definition, $\zeta (G)$ is equal to the maximum  $\zeta(C)$ among the connected components $C$ of $G$. Therefore, from now on, only connected graphs are considered.

This game restricted to $k=1$ was introduced by Seager \cite{Seager1},
 and studied further in \cite{Brandt}, \cite{WestTCS}, \cite{Seager2}. It can be shown that, for every tree $T$, $\zeta(T) \leq 2$~\cite{Seager1}. Moreover, trees $T$ for which $\zeta(T)=2$ are  characterized in~\cite{Seager2}. More precisely, Seager~\cite{Seager1} proved that one cop is sufficient to localize a robber on any tree when robber is not allowed to move to a vertex just checked by the cop (in the previous round). In~\cite{Seager2}, she proved that this restriction is necessary for trees that contains a ternary regular tree of height $2$ as a subtree.

The localization game is also connected to the notion of \emph{metric dimension}\footnote{The metric dimension of a graph $G$ is the minimum cardinality of a subset $S$ of vertices such that all other vertices are uniquely determined by their distances to the vertices in $S$.} of a graph $G$%
, denoted by $\dim (G)$, introduced independently by Harary and Melter \cite%
{Harary}, and by Slater \cite{Slater}. Indeed, $\dim (G)$ can be defined as
the least number $k$ such that the Cop-player wins the localization game in one turn by cleverly
choosing $k$ vertices of $G$. Hence, the parameter $\zeta (G)$ can be seen
as the game theoretic variant of $\dim (G)$. %, and by definition $\zeta(G) \leq \dim (G)$ in any graph $G$.
The localization game has also been introduced recently in~\cite{Haslegrave}, where it is proven that $\zeta(G) \leq \lfloor \frac{(\Delta+1)^2}{4}\rfloor+1$ in any graph $G$ with maximum degree $\Delta$.

Our main goal in this paper is to find out for which classes of graphs the
number $\zeta (G)$ is bounded. Clearly, this holds for classes with bounded
metric dimension by the trivial inequality $\zeta (G)\leq \dim (G)$. We are
mainly interested in graph classes with bounded traditional cop
number $c(G)$. In particular we focus on graphs with various geometric representations because of their frequent applications to cellular networks, including  planar graphs for which $c(G)\leq 3$ as proved by Aigner and Fromme in \cite{AignerFromm}.

\smallskip

\noindent {\bf Our results.} As a warm-up, we give easy results and intuitions on the localization number (Section~\ref{sec:warmup}). 
Then, we show $\zeta (G) \leq k$ for any graph with pathwidth at most $k$ (Section~\ref{sec:pathwidth}). Our main result is that, somewhat surprisingly, $\zeta (G)$ is unbounded in graphs obtained from any tree by adding a universal vertex (i.e., planar graphs with treewidth at most $2$) (Section~\ref{sec:planar}). Then, we show that deciding whether $\zeta (G) \leq k$ is NP-complete in the class of graphs $G$ with diameter at most $2$ (Section~\ref{sec:complexity}). Finally, we prove boundedness of $\zeta (G)$ for some geometric graph in the plane (Section~\ref{sec:euclidean}).
In the final section (Section~\ref{sec:conclusion}) we set several open problems for future research.

\section{Warm-up}\label{sec:warmup}

Let us start with determining the localization number of some simple graphs. For instance, for a path $P_{n}$ on $n$ vertices, we have $\zeta (P_{n})=1$. Indeed, the Cop-player wins in one turn by starting from one of the ends of the path, which shows that also $\dim (P_{n})=1$. For complete graphs we have $\zeta(K_{n}) = \dim(K_n) = n-1$. On the other hand, for a star $S_{n}$ on $n$ vertices, $\dim (S_{n})=n-1$, while $\zeta (S_{n})=1$. To see this latter statement suppose that the Cop-player probes the leaves one by one. At some point the robber must be located. So, the difference between parameters $\zeta (G)$ and $\dim (G)$ can be arbitrarily large. 

Determination of $\zeta (G)$ for complete bipartite graphs is slightly less immediate. %\przemek{I've simplified the proof for the centroidal version. Should we change the one below?}

\begin{proposition}
	Every complete bipartite graph $K_{a,b}$ satisfies $\zeta (K_{a,b})=\min
	(a,b)$.
\end{proposition}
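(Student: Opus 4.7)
Assume without loss of generality that $a\le b$, so $\min(a,b)=a$, and write $A=\{u_1,\dots,u_a\}$, $B=\{v_1,\dots,v_b\}$. The key structural fact I would use throughout is that all distances in $K_{a,b}$ lie in $\{0,1,2\}$, and for a vertex $w\notin S$ the response $D(w)$ depends only on which part contains $w$: entries of the response at positions in $S_A:=S\cap A$ are all $2$'s or all $1$'s according as $w\in A$ or $w\in B$, and symmetrically at positions in $S_B:=S\cap B$. Consequently, after any single probe $S$, the vertices outside $S$ partition into at most two ``shadow classes,'' $A\setminus S_A$ and $B\setminus S_B$, which the cop cannot tell apart internally.

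For the upper bound $\zeta(K_{a,b})\le a$, I would use the strategy in which at turn $i\in\{1,2,\dots,b-1\}$ the Cop-player probes $S_i=\{u_1,\dots,u_{a-1},v_i\}$. With this choice, one shadow class is the singleton $\{u_a\}$ (so being there immediately reveals the robber), and the other is $B\setminus\{v_i\}$. I would prove by induction on $i$ that either the cop has already won, or at the start of turn $i$ the candidate set for the robber equals $A\cup(B\setminus\{v_1,\dots,v_{i-1}\})$: the inductive step uses that a robber known to lie in $B\setminus\{v_1,\dots,v_{i-1}\}$ at the end of turn $i-1$ can only move to a vertex in $A\cup(B\setminus\{v_1,\dots,v_{i-1}\})$. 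When $i=b-1$, the residual candidate set is $A\cup\{v_{b-1},v_b\}$, and the probe $S_{b-1}$ splits it entirely into singletons, so the cop wins.

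For the lower bound $\zeta(K_{a,b})\ge a$, I would exhibit a Robber-player strategy against any $k\le a-1$ cops that maintains the invariant: after each probe, the set $T$ of positions still consistent with the cop's observations is contained in a single part and satisfies $|T|\ge 2$. At each turn the robber picks the larger of the two shadow classes (intersected with the current $T$); since shadow classes are by definition subsets of $A$ or of $B$, the ``single part'' conclusion is automatic. The essential check is a cardinality estimate: on the first probe the shadow classes have total size $a+b-k\ge b+1\ge 3$, and after a robber in $T\subseteq A$ moves, the candidate set becomes $T\cup B$, whose two shadow classes at the next probe have total size at least $|T|+b-k\ge 2+b-(a-1)=b-a+3\ge 3$. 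Hence at least one class has size $\ge 2$, so the robber always has a surviving class to commit to. I expect the only delicate point to be the bookkeeping across rounds: keeping track of what the cop deduces not only from the current response but also from the combination of past responses with the constraint that the robber moves along edges.
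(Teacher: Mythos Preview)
Your upper bound is exactly the paper's strategy: probe $\{u_1,\dots,u_{a-1}\}$ together with one fresh vertex of $B$ each round, and track the shrinking candidate set $A\cup(B\setminus\{v_1,\dots,v_{i-1}\})$. This is correct as written.

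For the lower bound your route is essentially the paper's as well, but packaged differently. The paper argues more directly: if the Cop-player probes at least one vertex from $B$, then at most $a-2$ vertices of $A$ are probed, so at least two vertices of $A$ are indistinguishable and the robber sits among them; if the cop probes only from $A$, the robber sits anywhere in $B$. Your cardinality estimate on the shadow classes inside $T\cup B$ amounts to the same observation.

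There is one point where your write-up is slightly loose (and, in fairness, the paper is equally brief here): you say ``the robber picks the larger of the two shadow classes,'' but the robber cannot move within his own part---from $t\in T\subseteq A$ he can only stay at $t$ or jump to $B$. So if the larger class is $T\setminus S_A$ and $t\in S_A$, he cannot ``pick'' it. The fix is short: if $|B\setminus S_B|\ge 2$ the robber moves to $B$; otherwise $|S_B|\ge b-1\ge a-1\ge k$, forcing $S_A=\emptyset$, so $t\notin S_A$ and the robber stays, with the class $T\setminus S_A=T$ still of size $\ge 2$. Making this case split explicit closes the gap you flagged as ``delicate bookkeeping.''
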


\begin{proof}
	Let $A$ and $B$ be the two partition classes of $K_{a,b}$ with size $a$ and $b$, respectively. We can assume that $a\leq b$. First we will show that $\zeta (K_{a,b})\leq \min (a,b)$. In the first round the Cop-player probes all vertices of $A$ except one,  and probes one vertex of $B$. If the robber occupies any vertex from $A$, the Cop-player wins, as she gets answer $0$ from one vertex from $A$ or all answers from vertices from $A$ are $2$ in which case the robber is in the unselected vertex from $A$. Otherwise, in next turns, the Cop-player changes only one vertex; she still probes all vertices of $A$ except one, and she probes a new vertex of $B$. If the robber stays all the time in the same of the vertices of $B$, the Cop-player will locate him at some round. If he moves to some vertex from $A$, she will also locate him.
	
	Now we will show that if the Cop-player can probe at most $\min (a,b)-1$ vertices in	each round, then the robber has a winning strategy. If the Cop-player probes all vertices from only one partition class, then the robber may choose any of vertices from the second class. If the Cop-player probes at least one vertex from $B$, then the robber may stay in one of the unselected vertices from $A$ (remind that the robber fully knows the strategy of Cop since we are considering a worst case). As there are at least two such vertices the robber remains hidden. The proof is complete.
\end{proof}

\begin{corollary}
	Every bipartite graph $G$ with partition classes of size $a$ and $b$
	satisfies $\zeta (G)\leq \min (a,b)$.
\end{corollary}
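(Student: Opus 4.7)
The plan is to adapt the strategy from the preceding proposition, replacing the specific distance values $\{0,1,2\}$ used in $K_{a,b}$ by their parity analogues available in any bipartite graph. Assume without loss of generality that $a \leq b$, pick a vertex $a_{0}\in A$, fix any enumeration $b_{1},\ldots,b_{b}$ of $B$, and in round $t$ let the Cop-player probe $(A\setminus\{a_{0}\}) \cup \{b_{t}\}$. This uses $(a-1)+1=a=\min(a,b)$ cops, as required.

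The key structural fact is that in a bipartite graph $d_{G}(u,v)$ is even if and only if $u$ and $v$ belong to the same partition class, so parity plays the role of the concrete values $1$ and $2$ available in $K_{a,b}$. After the probing of round $t$ I would argue by cases: (i) if some $a_{i}$-probe returns $0$, the robber sits at $a_{i}$; (ii) if all $A\setminus\{a_{0}\}$-probes return even values, then by parity the robber is in $A$, yet he is not in $A\setminus\{a_{0}\}$ since no $0$ appears, hence he is at $a_{0}$; (iii) if the $b_{t}$-probe returns $0$, the robber sits at $b_{t}$; (iv) otherwise the robber is at some vertex of $B\setminus\{b_{t}\}$.

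In cases (i)--(iii) the Cop-player wins at once. In case (iv) the robber then moves; but since $G$ is bipartite and the robber is in $B$, his only neighbors lie in $A$, so after the move he is either in $A$ (and is caught in the next round via (i) or (ii)) or he has stayed in place. A straightforward induction on $t$ then yields the invariant that, as long as the game has not ended, every position still consistent with the Cop-player's cumulative observations lies in $B \setminus \{b_{1},\ldots,b_{t}\}$. When $t = b-1$ this leaves at most one candidate vertex, so the Cop-player locates the robber no later than round $b-1$.

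The main obstacle I anticipate is the bookkeeping in case (iv): one has to verify carefully that any excursion of the robber into $A$ is detected at the end of the round in which it occurs, so that throughout the game the robber is effectively pinned to a single vertex of $B$ and the analysis collapses to a cyclic sweep of $B$ as in the proposition.
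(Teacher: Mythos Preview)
Your proposal is correct and follows essentially the same approach as the paper: probe $A\setminus\{a_0\}$ together with a rotating vertex $b_t\in B$, use parity of the returned distances to decide which side the robber occupies, locate him immediately if he is on the $A$-side, and otherwise exploit the fact that there are no edges inside $B$ to conclude he is pinned to a single $B$-vertex, which is eventually hit by the sweep. The paper's proof is a two-sentence sketch of precisely this idea; your version simply spells out the case analysis and the invariant more carefully.
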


\begin{proof}
	The idea of the proof is the same as in case of complete bipartite graphs. This time, the cops recognize the partition class in which the robber is by parity of answers. If all answers from vertices from $A$ are even and different from $0$, then the robber is in the unselected vertex from $A$. Otherwise he occupies one of vertices from $B$ and the cops will eventually locate him in some future round.
\end{proof}

It is also worth noticing that in general parameter $\zeta (G)$ is not monotone on taking subgraphs. Let $G$ be as depicted in Figure~\ref{fig:mon}, and let $H=K_{4}$ be its subgraph. It is not hard to check that $\zeta(G)=2$ (by
starting with two added vertices), while $\zeta(H)=\zeta (K_{4})=3$.

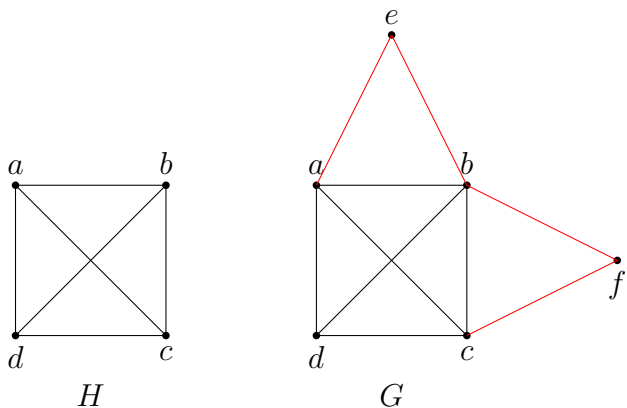
\begin{figure}[th]
	\centering
	\begin{tikzpicture}
	\coordinate (a) at (0, 2);
	\coordinate (b) at (2, 2);
	\coordinate (c) at (2, 0);
	\coordinate (d) at (0, 0);
	
	\node at (a) [above] {$a$};
	\node at (b) [above] {$b$};
	\node at (c) [below] {$c$};
	\node at (d) [below] {$d$};
	\node at (1, -0.5) [below] {$H$};
	
	\path[fill=black] (a) circle (0.05);
	\path[fill=black] (b) circle (0.05);
	\path[fill=black] (c) circle (0.05);
	\path[fill=black] (d) circle (0.05);
	
	\path[draw](a) -- (b);
	\path[draw](a) -- (c);
	\path[draw](a) -- (d);
	\path[draw](c) -- (b);
	\path[draw](d) -- (b);
	\path[draw](c) -- (d);
	
	\coordinate (a) at (4, 2);
	\coordinate (b) at (6, 2);
	\coordinate (c) at (6, 0);
	\coordinate (d) at (4, 0);
	\coordinate (e) at (5, 4);
	\coordinate (f) at (8, 1);
	
	\node at (a) [above] {$a$};
	\node at (b) [above] {$b$};
	\node at (c) [below] {$c$};
	\node at (d) [below] {$d$};
	\node at (e) [above] {$e$};
	\node at (f) [below] {$f$};
	\node at (5, -0.5) [below] {$G$};
	
	\path[fill=black] (a) circle (0.05);
	\path[fill=black] (b) circle (0.05);
	\path[fill=black] (c) circle (0.05);
	\path[fill=black] (d) circle (0.05);
	\path[fill=black] (e) circle (0.05);
	\path[fill=black] (f) circle (0.05);
	
	\path[draw](a) -- (b);
	\path[draw](a) -- (c);
	\path[draw](a) -- (d);
	\path[draw](c) -- (b);
	\path[draw](d) -- (b);
	\path[draw](c) -- (d);
	\path[color=red, draw](b) -- (f);
	\path[color=red, draw](c) -- (f);
	\path[color=red, draw](a) -- (e);
	\path[color=red, draw](b) -- (e);
	
	\end{tikzpicture}
	\caption{$\zeta(H) = 3$, $\zeta(G) = 2$}
	\label{fig:mon}
\end{figure}

\section{Pathwidth}\label{sec:pathwidth}

In this section, we show that the localization number of a graph is bounded from above by the pathwidth of the graph.

A {\it path-decompositon} of a graph $G=(V,E)$ is a sequence ${\mathcal X}=(X_1,\cdots,X_t)$ of subsets of $V$, called {\it bags}, such that, for every edge $\{u,v\} \in E$, there exists a	bag containing both $u$ and $v$, and such that, for every $1\leq i \leq k \leq j \leq t$, $X_i \cap X_j \subseteq X_k$. The {\it width} of $\mathcal X$ equals $\max_{1\leq i \leq t} |X_i|-1$ and the {\it pathwidth}  of $G$, denoted by $pw(G)$, is the minimum width of its path-decompositions. Pathwidth and path-decompositions are closely related to some kind of pursuit-evasion games~\cite{Bienstock}.

\begin{proposition}
Every graph $G$ satisfies $\zeta(G) \leq pw(G)$. Moreover, this bound is achieved for interval graphs. 
\end{proposition}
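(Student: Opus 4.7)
The plan for the upper bound $\zeta(G)\leq pw(G)$ is to fix a path-decomposition $(X_1,\ldots,X_t)$ of $G$ of width $k:=pw(G)$ and design a sweeping winning strategy for $k$ cops. Writing $V_j:=X_1\cup\cdots\cup X_j$, the Cop-player would process the bags from left to right while maintaining the invariant that, at the beginning of round $i$, she knows the robber's current vertex lies in $V\setminus V_{i-1}$ (vacuously true for $i=1$). Once $i=t+1$ the set $V\setminus V_t$ is empty, so the robber must have been located by then.

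First I would describe, for each round $i$, which $k$ vertices of $X_i$ to probe. If $|X_i|\leq k$ she probes all of $X_i$; if $|X_i|=k+1$ she probes $X_i$ minus a well-chosen vertex $v^{*}_i$, for instance a vertex of $X_{i-1}\cap X_i$ whose distance to the robber can be reconstructed from the previous round's probe. Second, I would invoke the separator property of path-decompositions---that $X_i$ separates $V_{i-1}\setminus X_i$ from $V\setminus V_i$---to argue that the distance vector returned in round $i$, combined with the invariant, must either (a) pin down the robber's vertex (via a zero distance, or via unique identification of $v^{*}_i$ inside $X_i$), or (b) be consistent only with positions in $V\setminus V_i$, re-establishing the invariant at step $i+1$.

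The hard part will be handling the robber's move between rounds. After round $i$, the robber steps to a neighbor which, again by the separator property, must lie either in $V\setminus V_i$ or in $X_i\cap X_{i+1}$; in the latter case the invariant is momentarily violated but is restored by the probe of round $i+1$, which covers all of $X_{i+1}\supseteq X_i\cap X_{i+1}$. Turning this informal observation into a clean, provably preserved invariant and selecting the omitted vertex $v^{*}_i$ consistently, so that it remains distinguishable from every other candidate, constitutes the main technical step.

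For the second assertion, it suffices to note that the complete graph $K_{k+1}$ is an interval graph with $pw(K_{k+1})=k$ and, as observed in the warm-up, $\zeta(K_{k+1})=k$; hence the bound $\zeta(G)\leq pw(G)$ is tight in every pathwidth.
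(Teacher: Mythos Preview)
Your sweeping framework and invariant are exactly the paper's approach, but you have correctly identified---and then left open---the one step that carries the whole argument: which vertex of a full bag to omit so that it remains identifiable. Your tentative suggestion, taking $v^{*}_i\in X_{i-1}\cap X_i$ and recovering its distance from the \emph{previous} round, does not work as stated: there is no previous round when $i=1$, $v^{*}_i$ need not have been probed in round $i-1$ (it could equal $v^{*}_{i-1}$), and even if it was, the robber has moved since then, so the old distance only constrains the new one to within $\pm 1$. More structurally, choosing the omitted vertex in $X_{i-1}\cap X_i$ is precisely the region into which the robber may slip back after round $i-1$, so you would be omitting exactly the vertex you most need to test.

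The paper resolves this within the \emph{current} round. After normalizing the decomposition so that $X_i\setminus X_{i+1}\neq\emptyset$ and every vertex there has a neighbor in $X_i$, pick $u_i\in X_i\setminus X_{i+1}$ and let the omitted vertex be any neighbor $v_i$ of $u_i$ in $X_i$. Because $u_i$ appears in no bag with index $>i$, all neighbors of $u_i$ that lie in the invariant region $\bigcup_{j\geq i}X_j$ are in $X_i$; hence, if no probed vertex returns distance $0$ but $u_i$ returns distance $1$, the robber is at the unique unprobed neighbor of $u_i$ in $X_i$, namely $v_i$. This single observation closes the gap and makes the invariant (robber in $\bigcup_{j\geq i}X_j$) go through cleanly.

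On the second assertion: your $K_{k+1}$ example is correct and shows the bound is tight, but the paper proves more---namely that $\zeta(G)=pw(G)$ for \emph{every} interval graph $G$, by letting the robber hide in a maximum clique (of size $pw(G)+1$) so that $pw(G)-1$ probes always leave two clique vertices indistinguishable.
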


\begin{proof}
We may assume that $G$ is connected and has at least two vertices.	Let $(X_1,\cdots,X_t)$ be an optimal path-decomposi\-tion (of width $pw(G)$) of $G$. We may assume that $|X_{i} \setminus X_{i+1}|\geq 1$  for every $1\leq i< t$ (otherwise, if $X_{i} \subseteq X_{i+1}$, we may consider the path decomposition  $(X_1,\cdots,X_{i-1},X_{i+1},\cdots,X_t)$). We also may assume that, for every $1\leq i< t$ and every $u \in X_{i} \setminus X_{i+1}$, $u$ has a neighbor in $X_{i}$ (otherwise, because $G$ is connected, we may consider the path decomposition  $(X_1,\cdots,X_{i-1},X_{i} \setminus \{u\}, X_{i+1},\cdots,X_t)$).

For every $1\leq i < t$, let $u_{i}$ be any vertex in $X_{i} \setminus X_{i+1}$ and let $v_{i}$ be a neighbor of $u_{i}$ in $X_{i}$. Finally, let $v_t$ be any vertex in $X_t \setminus X_{t-1}$ and $u_t$ be any neighbor of $v_t$ ($u_t$ exists since $G$ is connected and belongs to $X_t$). Sequentially, for $i=1$ to $t$, the Cop-player probes $X_i \setminus v_i$. We will show by induction on $i$, that before the $i^{th}$ turn of the Cop-player, the robber must occupy some vertex in $\bigcup_{i\leq j} X_j$ (this  clearly holds for $i=1$). If the robber is in $X_i \setminus v_i$, he is located. Moreover, if he is in $v_i$, he is located because the robber is at distance $1$ from $u_i$ (and not already localized) iff he is at $v_i$. 	
	If not located, the robber must be in $G \setminus (\bigcup_{j\leq i} X_j)$. By moving, the robber can only reach some vertex in $\bigcup_{i+1\leq j} X_j$. Hence, the induction hypothesis holds. Eventually (for $i=t$), the robber will be located.
	
	To show that the bound is achieved in interval graphs, consider the strategy where the robber stays in a maximum clique of an interval graph $G$ (i.e., a clique of size $pw(G)+1$). Using at most $pw(G)-1$ probes, there are always two vertices of the clique that are not distinguishable and where the robber may be. 
\end{proof}

\section{Planar graphs; blind cops and bush cutting}\label{sec:planar}

This section is devoted to prove that the localization number of planar graphs (even for graphs obtained from a tree by adding to it a universal vertex, in particular, with treewidth $2$) is unbounded. 
To this end we introduce another variant of the localization game in which the cops are \emph{blind}. The game proceeds as follows. First the robber chooses a vertex of a graph $G$ (unknown to the cops). Then $k$ blind cops probe $k$ vertices of $G$ and their neighbors. That is, if the robber is occupying one of the probed vertices or any neighbor of the probed vertices, it is immediately caught. Otherwise, the cops only learn that the robber do not occupy any of these vertices. If the robber is caught the game is over. Otherwise, in next round robber may walk to a neighboring vertex while cops may choose new vertices arbitrarily. Let $\zeta_b(G)$ denote the least number of blind cops needed to catch the robber on a graph $G$. Note that $\zeta_b(G)$ and $\zeta(G)$ are {\it a priori} not comparable since, in the blind game, the cops have less power if the robber is ``far" (they do no get any information about its distance) but the cops are more powerful if the robber is ``close", since the robber is precisely located (and so, caught) at any vertex of the closed neighborhood of the probed vertices. 

However, the following proposition shows relation between the two localization games.

\begin{proposition}\label{prop:blind}
For a given graph $G$, let $G'$ be a copy of $G$ with one additional vertex $v$ adjacent to all vertices of $G$. Then $\zeta_b(G)\leq \zeta(G').$
\end{proposition}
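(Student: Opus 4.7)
The plan is to convert a winning Cop-player strategy on $G'$ into a winning strategy for the blind cops on $G$ using the same number of probes. Let $k=\zeta(G')$ and let $\sigma$ be a winning strategy for the Cop-player with $k$ cops on $G'$. The blind cops on $G$ will mimic $\sigma$ in a simulated localization game on $G'$, maintaining a \emph{virtual robber} in $G'$ that is pretended to always occupy the same vertex as the real blind robber. Since the real robber stays in $V(G)\subset V(G')$ and moves along edges of $G\subseteq G'$, this mirroring is a legal strategy for the virtual robber in $G'$.

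At each round, when $\sigma$ prescribes a probe set $B'\subseteq V(G')$, the blind cops probe $B'\cap V(G)$ in $G$, using at most $k$ probes. The translation rests on the following observation: for any $u,r\in V(G)$, because the virtual robber $r$ is not at $v$, one has $d_{G'}(r,u)\leq 1$ iff $r\in N_G[u]$. Thus $d_{G'}(r,u)\leq 1$ corresponds exactly to the blind cop probing $u$ catching the robber, while $d_{G'}(r,u)=2$ corresponds to not catching. Moreover $d_{G'}(r,v)=1$ whenever $r\in V(G)$. Therefore, if the blind cops catch the robber in this round we are done; otherwise they feed back to $\sigma$ the response ``$2$'' for each $u\in B'\cap V(G)$ and ``$1$'' for any occurrence of $v$ in $B'$, these being precisely the true distances of the virtual robber.

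Because $\sigma$ wins against every robber strategy on $G'$, in particular against the mirroring one, there must be a finite round $T$ at which $\sigma$ uniquely identifies the virtual robber's position $r^*$; since the responses fed back are honest, $r^*\in V(G)$ is the actual position of the blind robber at the end of round $T$, just before its move. In round $T+1$, the blind cops simply probe $r^*$: the real robber has meanwhile moved along an edge of $G$ to a vertex in $N_G[r^*]$ and is therefore caught. This proves $\zeta_b(G)\leq k=\zeta(G')$. The only delicate point is verifying that the simulated responses are exactly what $\sigma$ would see against the mirroring virtual robber, which is immediate from the fact that adding the universal vertex $v$ forces every pair of non-adjacent vertices of $G$ to be at distance exactly $2$ in $G'$.
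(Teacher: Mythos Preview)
Your proof is correct and follows essentially the same approach as the paper's: simulate a winning $k$-cop strategy on $G'$ using blind cops on $G$, exploiting that distances in $G'$ from a vertex of $G$ to any other vertex are only $1$ or $2$, then use one extra round to probe the identified location. The paper frames this by adding an auxiliary $(k{+}1)$st cop permanently at $v$ (which is ``free'' since the blind robber never leaves $V(G)$), whereas you handle $v$ directly by noting that any probe of $v$ must return distance $1$; these are equivalent presentations of the same idea.
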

 
\begin{proof}
Suppose that $\zeta(G')=k$, and consider localization game on $G'$ with $k+1$ cops, with one extra cop placed on the new vertex $v$ during the whole game. This forces robber to use only vertices of $G$, but of course, the cops still have a winning strategy. Moreover, each situation in the usual localization game on $G'$ is now transformed into the blind cops game on $G$ (removing the cop in $v$). That is so, because the distance in $G'$ from the robber to any vertex occupied by a cop is either one or two, which corresponds to the situation when the robber is in the neighborhood of a cop or not (in the blind game on $G$). By the assumption the usual game ends with localizing the robber at some vertex $x$. Hence, the blind game can be ended in the next move by placing a cop on that vertex $x$.
\end{proof}

Our aim is now to prove that there exists a sequence of trees $\mathcal{T}_k$ such that $\zeta_b(\mathcal{T}_k)>k$. By Proposition~\ref{prop:blind}, this will prove unboundedness of $\zeta(G)$ for planar graphs since each graph $\mathcal{T'}_k$ is planar. To achieve this goal we introduce a slightly modified game just to simplify the logic of forthcoming arguments.

Consider the following turn-by-turn game. 
Let $k$ be a positive integer.
Initially, a {\it bush} is present on each vertex of a graph and the goal of $k$ {\it bush-cutters} is to remove it completely. 
At every turn, every bush-cutter chooses a vertex and removes the bush from the closed neighborhood of it. Then, the bush extends to every neighbor of the vertices where the bush is still present.
Let the \emph{bush number} of $G$, denoted by $B(G)$, be the least number of bush-cutters for which there is a strategy to remove the bush. Clearly by choosing any dominating set bush-cutters can cut the whole bush in one round. Hence the bush number is well defined and $B(G) \le \gamma(G)$, where $\gamma(G)$ denotes the size of a smallest dominating set of $G$. 

Note that this model is connected with the blind cops game strategy in the following sense.

\begin{proposition}\label{prop:bush}
Every graph $G$ satisfies $B(G) \leq \zeta_b(G)$.
\end{proposition}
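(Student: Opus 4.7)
The plan is to translate any winning blind-cop strategy directly into a bush-cutting strategy by reusing the same sequence of probes. The key observation is that blind cops only ever receive a binary ``caught'' versus ``not caught'' signal; consequently, against a worst-case robber that avoids capture as long as possible, the Cop-player's strategy collapses to a fixed sequence of probe sets $P_1,\ldots,P_T \in V^k$, namely the probes played along the ``not caught yet'' branch of the strategy tree.

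For this sequence I would track the set $S_t\subseteq V$ of vertices where the (so-far uncaught) robber could possibly sit just after turn $t$, starting from $S_0=V$. In turn $t$ the cops probe $P_t$: the robber is certainly caught iff $S_{t-1}\subseteq N[P_t]$; otherwise the set of possibilities first shrinks to $S_{t-1}\setminus N[P_t]$, and, after the robber moves to a closed neighbor, becomes $S_t = N[S_{t-1}\setminus N[P_t]]$. The strategy being winning in at most $T$ turns is therefore exactly the statement $S_{T-1}\subseteq N[P_T]$, equivalently $S_T=\emptyset$.

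Now let the bush-cutters play the identical sequence $P_1,\ldots,P_T$ and denote by $B_t$ the set of vertices still carrying bush after round $t$. By the definition of the bush game (cutting removes $N[P_t]$, then bush spreads to the closed neighborhood of what remains), $B_0=V$ and $B_t = N[B_{t-1}\setminus N[P_t]]$, the same recurrence as for $S_t$ with the same initial value; a trivial induction on $t$ gives $B_t = S_t$. In particular $B_{T-1}\setminus N[P_T]=S_{T-1}\setminus N[P_T]=\emptyset$, so $B_T=\emptyset$ and $k$ bush-cutters have removed all bush, yielding $B(G)\leq k = \zeta_b(G)$. The only subtle point, which I would treat with care, is the reduction to a non-adaptive cop strategy: along any uncaught branch the cops learn only ``robber $\notin N[P_s]$'' for past probes, so a winning strategy must in particular catch the robber when the worst-case ``not caught'' branch is followed until the very last turn, and it is precisely this branch that the bush-cutters replay move-for-move.
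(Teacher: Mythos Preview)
Your proof is correct and rests on the same key observation as the paper: since blind cops receive no information along the ``not caught'' branch, their strategy is a fixed probe sequence, and the set of possible robber positions evolves by exactly the same recurrence as the bush. The paper argues the contrapositive (if $k$ cutters cannot clear the bush, then from any surviving bush vertex one can trace back an evading robber walk), whereas you argue directly by identifying $B_t=S_t$; these are two sides of the same coin. One small point worth making explicit is why a winning blind-cop strategy terminates in a \emph{bounded} number of turns $T$ (so that $S_T=\emptyset$): this follows from K\"onig's lemma applied to the finitely branching tree of uncaught robber walks, or equivalently from the finiteness of the state space as the paper notes.
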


\begin{proof}
Suppose for a contrary that $k$-cutters never can cut the whole bush on $G$, i.e., $k<B(G)$. We show that  $k<\zeta_b(G)$, that is, we show that the robber can win against $k$ blind cops. It means that we aim at describing an infinite strategy of the robber that prevents the $k$ blind cops to localized it. Since the strategy of the blind cops is deterministic and the number of configurations (positions of the cops and of the robber) is finite, it is sufficient to show that there is a long enough strategy of the robber that prevents the $k$ blind cops to localized it. We actually show that, for every $t\in \mathbb{N}$ and for every strategy of the cops, there is a strategy of the robber that prevents the $k$ blind cops to localized it for at least $t$ rounds. 

Let us consider any strategy $\mathcal S$ of $k$ blind cops (i.e., at every turn, $k$ vertices are probed). Note that, in the blind game, the cops get no information unless they localize the robber and win. Therefore, any strategy is actually defined as an infinite sequence $(v^i_1,\cdots,v^i_k)_{i \in \mathbb{N}}$, where the vertices in $(v^i_1,\cdots,v^i_k)$ are the ones that are probed at the ith turn. The strategy is winning if there is a turn $i$ when the robber is occupying a vertex in the closed neighborhood of the probed vertices at this turn.

Let us show that, for every $t \in \mathbb{N}$, there is a strategy for the robber to win against $\mathcal S$ during at least $k$ turns. Let ${\mathcal S}'$ be the strategy of $k$ brush-cutters that follows $\mathcal S$ (i.e., when one vertex $v$ is probed in $\mathcal S$, the bush is cut in $N[v]$ in ${\mathcal S}'$). Since $k<B(G)$, after $t$ turns, there is a vertex $x \in V(G)$ that still belongs to the bush. By definition of the growth of the bush, there exists a sequence $W=(x_1,\cdots,x_t)$ of vertices such that, for every $1\leq i\leq t$, $x_{i+1} \in N[x_{i}]$ and $x_i$ is in the bush after the ith turn of the cut-bushers.

Note that for every turn $i$, there is $x_i \notin \bigcup_{j\leq k} N[v^i_j]$. Hence, the robber just has to follow the walk $W$ in order to win $t$ turns against $\mathcal S$.
\end{proof}

Before proving the main result of this section we will need the following technical lemma on bicolored matchings in trees.
Let $(T,f)$ be any tree together with a coloring $f \colon V(T) \to \{0, 1\}$ where $f$ is any {\it $2$-vertex coloring} of the tree $T$ (not necessary proper). Recall that a {\it matching} is a set of disjoint edges. We say that a matching $M \subseteq E(T)$ of a $2$-colored tree $(T,f)$ is {\it bicolored} if every edge $uv \in M$ has endpoints in different colors, in other words $f(u) \neq f(v)$. We say that $(T,f)$ is {\it monochromatic} if $f^{-1}(1)=V(T)$ or $f^{-1}(0)=V(T)$, i.e., if all vertices receive the same color. The following claim is obvious:
\begin{claim}\label{claim}
If $(T,f)$ is not monochromatic, then there is a bicolored edge.
\end{claim}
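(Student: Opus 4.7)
The plan is to exploit connectedness of $T$: a tree on at least two vertices is connected, so any two vertices are joined by a path, and along such a path the color cannot change without a single edge carrying the change. This reduces the claim to a pigeonhole-style observation on a finite sequence of colors.

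First I would unpack the hypothesis. Since $(T,f)$ is not monochromatic, both preimages $f^{-1}(0)$ and $f^{-1}(1)$ are nonempty; otherwise all vertices would share a color. Pick $u \in f^{-1}(0)$ and $v \in f^{-1}(1)$. Because $T$ is a tree (in particular connected), there exists a path $u = w_0, w_1, \ldots, w_\ell = v$ in $T$ with $\ell \geq 1$.

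Next I would argue that some edge of this path must be bicolored. Consider the sequence $f(w_0), f(w_1), \ldots, f(w_\ell)$, which starts at $0$ and ends at $1$. If every consecutive pair satisfied $f(w_i) = f(w_{i+1})$, then by transitivity $f(w_0) = f(w_\ell)$, contradicting $f(u) \neq f(v)$. Hence there exists an index $i$ with $f(w_i) \neq f(w_{i+1})$, and the edge $w_i w_{i+1} \in E(T)$ is bicolored, as required.

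There is no real obstacle here: the statement is called ``obvious'' by the authors precisely because connectedness of $T$ immediately forces a color change along some edge of any monochromatic-endpoint-avoiding path. The only point worth spelling out, if one wishes to be formal, is the trivial finite induction on the path length that produces the changing edge from the endpoint color mismatch.
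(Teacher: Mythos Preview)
Your proof is correct and is precisely the natural argument the paper has in mind when it calls the claim ``obvious'': connectedness of $T$ forces a color change along any path joining vertices of different colors. There is nothing to add.
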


Given a tree $T$ rooted in $v \in V(T)$, an {\it r-subtree} $T'$ of $T$ is any sub-tree of $T$ rooted in a child of the root vertex $v$. For any $k\geq 1$ and $h \geq 1$, let $T^k_{h}$ be the complete $(12k+1)$-ary rooted tree of height $h$. Let $n_{h,k}=|V(T^k_{h})|=(12k+1)^h+1$.

\begin{lemma} \label{lem:bimatching}
	Let $k\geq 1$, $T^k_{h}$ be complete $(12k+1)$-ary rooted tree of height $1\leq h \le 6k$.  Let $f \colon V(T^k_{h}) \to \{0, 1\}$ be any $2$-vertex coloring of $T^k_{h}$ such that
\begin{equation}\label{eq:bimatching}
\frac{n_{h,k}+h-8k}{2} \le |f^{-1}(\{1\})| < \frac{n_{h,k}+6k-h}{2}.
\end{equation} 
Then there exists a bicolored matching of size at least $h$.
\end{lemma}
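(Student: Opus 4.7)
The plan is to proceed by induction on $h$. For the base case $h=1$, the tree consists of the root together with $12k+1$ children. The balance condition~\eqref{eq:bimatching} forces $0 < |f^{-1}(\{1\})| < n_{1,k}$ (which follows by checking the bounds using $h=1$ and $n_{1,k} \geq 12k+2$), so both colors appear in $T^k_1$ and Claim~\ref{claim} immediately yields a bicolored edge, i.e.\ a bicolored matching of size $1$.

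For the inductive step, fix $h \geq 2$ and assume the lemma for $h-1$. Let $v$ be the root of $T^k_h$, with children $u_1,\ldots,u_{12k+1}$ rooting r-subtrees $T_1,\ldots,T_{12k+1}$, each isomorphic to $T^k_{h-1}$. Write $m_i = |f^{-1}(\{1\}) \cap V(T_i)|$ and call $T_i$ \emph{non-monochromatic} if $0 < m_i < n_{h-1,k}$. I would split into two cases. In Case~1, at least $h$ of the r-subtrees $T_i$ are non-monochromatic; by Claim~\ref{claim} each contains a bicolored edge, and since distinct $T_i$'s are vertex-disjoint we immediately get $h$ pairwise disjoint bicolored edges.

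In Case~2, at most $h-1$ r-subtrees are non-monochromatic, so at least $12k+2-h \geq 6k+2$ r-subtrees are monochromatic (all-$0$ or all-$1$). I would then establish two sub-claims via direct counting against~\eqref{eq:bimatching}. First, at least one all-$0$ r-subtree and at least one all-$1$ r-subtree must be present: otherwise $|f^{-1}(\{1\})|$ would be too large (if no all-$0$) or too small (if no all-$1$), the estimate being driven by the approximate identity $n_{h,k} \approx (12k+1)\,n_{h-1,k}$ together with $h \leq 6k$. Second, some r-subtree $T_i$ satisfies the balance condition~\eqref{eq:bimatching} at height $h-1$; otherwise classifying each $T_i$ as ``deficient'' (with $m_i < (n_{h-1,k}+(h-1)-8k)/2$) or ``excessive'' (with $m_i \geq (n_{h-1,k}+6k-(h-1))/2$) and bounding $\sum_i m_i$ from above and below pushes $|f^{-1}(\{1\})|$ outside its allowed window.

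With such a balanced $T_i$ in hand, the inductive hypothesis produces a bicolored matching $M_i \subseteq E(T_i)$ of size $h-1$. To extend $M_i$ to size $h$, I would choose a monochromatic r-subtree $T_{j_0}$ whose unique color differs from $f(v)$ (this exists by the first sub-claim, and $j_0 \neq i$ because a balanced $T_i$ is necessarily non-monochromatic). Then $v u_{j_0}$ is a bicolored edge disjoint from $M_i$, and adjoining it yields the required matching of size $h$. The main obstacle is the numerical bookkeeping in the two sub-claims of Case~2: the window width $7k-h$ appearing in~\eqref{eq:bimatching} must precisely accommodate the contribution of one monochromatic r-subtree together with the root colour when recursing from height $h$ down to height $h-1$, which is exactly why the constants $6k$, $8k$ and the branching factor $12k+1$ in the statement are calibrated the way they are.
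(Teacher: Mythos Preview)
Your inductive scheme and the base case are fine, and sub-claim~(a) of Case~2 is correct. The gap is sub-claim~(b): it is \emph{not} true that some r-subtree must inherit the balance condition~\eqref{eq:bimatching} at height $h-1$. The sketched counting argument cannot work, because as soon as two or more r-subtrees are non-monochromatic, their counts $m_i$ can ``split'' a balanced total between a nearly-balanced value and a very small one, both lying on the deficient side.

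Here is a concrete counterexample with $k=1$ and $h=3$ (so $n_{2,1}=183$, $n_{3,1}=2380$). The global window~\eqref{eq:bimatching} at height~$3$ is $1188\le |f^{-1}(1)|\le 1191$, while the local window at height~$2$ is $88\le m_i\le 93$. Take six all-$1$ r-subtrees, five all-$0$ r-subtrees, two non-monochromatic r-subtrees with $m_i=87$ and $m_i=3$, and colour the root~$0$. Then $|f^{-1}(1)|=6\cdot 183+87+3=1188$, so the global balance holds; there are exactly $h-1=2$ non-monochromatic r-subtrees, so you are in Case~2; yet every r-subtree has $m_i\in\{0,3,87,183\}$, none of which lies in $[88,93]$. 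Hence no r-subtree is balanced and the induction cannot be invoked.

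This obstruction is precisely why the paper's proof does more work when there are $x\ge 2$ non-monochromatic r-subtrees: it passes to the s-subtrees (subtrees rooted at grandchildren of the root), and exchanges the colourings of certain s-subtrees between r-subtrees so as to reduce, without increasing the maximum bicolored matching, to the situation where exactly one r-subtree $T'$ is non-monochromatic. In that situation the counts of all-$0$ and all-$1$ r-subtrees are forced to be exactly $6k$ each, and then the arithmetic does show that $T'$ satisfies~\eqref{eq:bimatching} at height $h-1$. Your argument recovers the paper's argument in the case $x=1$, but the reduction from $x\ge 2$ to $x=1$ is the missing ingredient.
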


\begin{proof}
	The proof is by induction on $h\geq 1$. For base step, $h = 1$ note that a tree $T^k_1$ have between $2k$ and $9k$ leaves with color 1. Hence, since $T^k_1$ has $12k+1$ leaves, $T^k_1$ is non monochromatic, there is a bicolored edge (matching of size 1). 

	Now, let $h$ be such that $1 < h \le 6k$, and let us assume by induction that the lemma is true for every $h' < h$. Let $f$ be any $2$-coloring of $T^k_h$ satisfying (\ref{eq:bimatching}), and let $M$ be a corresponding maximum bicolored matching. We show that $|M| \ge h$. 
	
	%For purpose of contradiction, let us assume that $|M| < h$. 
	
	If $(T^k_h,f)$ contains at least $6k$ (vertex-disjoint) non-monochromatic r-subtrees, then there is a bicolored matching $M$ with $|M|\geq 6k\geq h $ by Claim~\ref{claim} and the lemma is proved. 	
	Hence, we may assume that $T^k_h$ contains at most $6k-1$ non-monochromatic r-subtrees, i.e., at least $6k+2$ monochromatic $r$-subtrees.
	%At first, note that $T^k_h$ contains at least $6k+1$ monochromatic r-subtrees (otherwise, the number of non monochromatic r-subtrees is greater than $6k$ and therefore $|M| > 6k$). 
	
	By (\ref{eq:bimatching}), for $y \in \{0, 1\}$, the number of monochromatic $y$-colored r-subtrees is not greater than $6k$. Therefore, there must be at least one monochromatic r-subtree colored with $0$ and at least one colored with $1$. Hence, the root of $T^k_h$ is an endpoint of some edge of $M$.  There are two cases to be considered.
	\begin{enumerate}
	\item If a unique r-subtree (say $T'$) is not colored monochromatically by $f$, then we are done. Indeed, in this case, by above paragraph, there are exactly $6k$ monochromatic $r$-subtrees colored with $0$ and $6k$ monochromatic $r$-subtrees colored with $1$. Note that the number of vertices $n_{h-1,k}$ of any $r$-subtree satisfies $(12k+1)n_{h-1,k} + 1= n_{h,k}$. Hence, by (\ref{eq:bimatching}), the number $|f^{-1}_{|T'}(\{1\})|$ of vertices colored with $1$ in $T'$ is between $\frac{n_{h,k}+h-8k}{2}-6k*n_{h-1,k}-1$ (the last "minus one" is in case the root of $T^k_h$ is colored with $1$) and $\frac{n_{h,k}+6k-h}{2}-6k*n_{h-1,k}$. That is, 
	
	$$ \frac{n_{h-1,k}+(h-1)-8k}{2} \leq |f^{-1}_{|T'}(\{1\})| < \frac{n_{h-1,k}+6k-(h-1)}{2}$$
	
Therefore, by the inductive assumption, $T'$ contains a matching of at least $h-1$ bicolored edges. Adding the bicolored edge between one monochromatic $r$-subtree and the root of $T^k_h$, we get that a bicolored matching of size at least $h$ in $T^k_h$.
	
	\item Suppose now that there are $x \ge 2$ non monochromatic r-subtrees. Enumerate all r-subtrees by $T_1, T_2, \dots, T_{12k+1-x}, T_{12k+2-x}, \dots, T_{12k+1}$, in such a way that, for every $i \le 12k+1-x$, the r-subtree $T_i$ is monochrome (while the $r$-subtrees $T_{12k+2-x}, \dots, T_{12k+1}$ are not monochomatic). Consider now the subtrees of $T^k_h$ rooted in the grand-children of the root of $T^k_h$, i.e., the r-subtrees of the $T_i$'s. To avoid confusion, let us refer to these $(12k+1)(12k+1)$ subtrees as the s-subtrees.
	
	Note that there are less than $6k$ non monochromatic such s-subtrees since otherwise, by Claim~\ref{claim}, we would already get a bicolored matching of size at least $6k$. Combined with (\ref{eq:bimatching}), this implies that, for $y\in \{0,1\}$, there are at least $(12k+1)6k+1$ monochromatic $s$-subtrees colored with $y$. Therefore, at least one of $T_i$'s has monochromatic s-subtree in each color. 
	Renumbering when necessary, assume that it is $T' = T_{12k+1}$. We claim that it is possible to improve the coloring $f$ to a new coloring $f'$, satisfying $|f'^{-1}(\{1\})| = |f^{-1}(\{1\})|$  such that the maximum bicolored matching $M'$ corresponding to $f'$ is such that $|M'| \le |M|$ and all trees $T_i$ for $i < 12k+1$ are monochromatic. Therefore, we are back to the case (1) when there is exactly one non-monochromatic $r$-subtree which finishes the proof.   
	
	Let $z < 6k$ be the number of non monochromatic s-subtrees. Note that, the $x$ non monochromatic r-subtrees $T_i$ ($i \ge 12k + 2 - x$) contain $(12k+1)x - z$ monochromatic s-subtrees. Let $t_y$ be the number of monochromatic s-subtrees with color $y\in \{0,1\}$. Note that, according to (\ref{eq:bimatching}) one has $$t_0 \!\!\mod (12k+1) + t_1 \!\!\mod (12k+1) + z = 12k+1.$$ 
Indeed, for $h = 2$ there is $z = 0$. For $h > 2$ consider such a coloring of these $z$ s-subtrees that numbers of $y$-colored vertices are the same and at most one s-subtree is non monochrome, clearly there is $z_y < z$ $y$-colored s-subtrees for $y \in \{0, 1\}$. Notice that (from (\ref{eq:bimatching})) there is $t_y \mod (12k+1) + z_y = 6k$, while $z_y < z < 6k$.
	
	We want to exchange these s-subtrees (precisely, we aim at exchanging their coloring) between r-subtrees to minimize the number of r-subtrees containing s-subtrees in both colors. So, for $y \in \{0, 1\}$, let $x_y = \lfloor \frac{t_y}{12k+1} \rfloor$ be the number of desired monochromatic trees $T_i$ with $i \ge 12k+1-x$. Clearly $x_0 + x_1 = x-1$, moreover at least $x_y$ r-subtrees $T_i$ contains monochromatic s-subtree in a color $y$. For $i$ between $12k+2-x$ and $12k$ assign $g_i$ as the desired color of the r-subtree $T_i$ following the two rules: the tree $T_i$ contains monochromatic s-subtree in color $g_i$ and among values of $g_i$, there are precisely $x_0$ zeros and $x_1$ ones. 
	
	By a simple counting argument, for at least one color $y \in \{0, 1\}$ there are at least two monochromatic $y$-colored s-subtrees in $T'= T_{12k+1}$ and there is a tree $T_i$ with $g_i = y$ such that $T_i$ contains an s-subtree which is not in color $y$ (either non monochrome or in other color). We improve the coloring exchanging those two subtrees. Note, that it does not increase the size of the bicolored matching (as $T'$ contains another monochromatic subtrees in both colors, while $T_i$ in color $y$).
We repeat this procedure until each $r$-subtree $T_i$, $i < 12k+1$, contains only monochromatic s-subtrees in one color. Now, for each $r$-subtrees $T_i$ for which the root has a color different than $g_i$, let us exchange the color of this root with the color of some vertex from the tree $T'$ with the opposite color. Clearly it removes one edge from a matching (in the tree $T_i$) and adds at most one in $T'$. Therefore it does not increase the total size of the bicolored matching. Finally, we have obtained a coloring $f'$ as claimed which finishes the proof.  	
	\end{enumerate}
\end{proof}

We are now ready to prove the aforementioned result.

\begin{theorem}\label{theo:bush}
	For every $k\geq 1$, there exists a tree $\mathcal{T}_k$ such that $B(\mathcal{T}_k) > k$.
\end{theorem}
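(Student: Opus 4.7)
The plan is to take $\mathcal{T}_k := T^k_{6k}$ (the complete $(12k+1)$-ary tree of height $6k$) and show that no strategy of $k$ bush-cutters clears the bush. I would proceed by contradiction: fix any cutter strategy and examine the evolution of the bush sets $B_0 = V(\mathcal{T}_k), B_1, B_2, \ldots$, where
\[
B_{t+1} \;=\; N\!\left[\, B_t \setminus \bigcup_{i=1}^{k} N[v^{t+1}_i] \,\right].
\]
The coloring $f_t \colon V(\mathcal{T}_k)\to\{0,1\}$ with $f_t^{-1}(\{1\}) = B_t$ is the object on which I want to apply Lemma~\ref{lem:bimatching} with $h = 6k$.

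First I would bound the per-turn decrease: in a single round the cuts remove at most $k(12k+3)$ vertices from the bush, and the regrowth step can only enlarge the surviving set, so $|B_t| - |B_{t+1}| \leq k(12k+3)$. Starting from $|B_0| = n_{6k,k}$ and ending, by assumption, at some $|B_T| = 0$, a monotone pigeonhole argument (applied to the last time $|B_t|$ drops below $n_{6k,k}/2$) gives a round $t^\star$ at which $|B_{t^\star}|$ lands inside the range required by (\ref{eq:bimatching}). For $h=6k$ this range is nonempty of width $k$, and I would argue that such a $t^\star$ must exist by examining the first round at which the bush size crosses the window: if it skips over, one can look instead at the intermediate set $B_{t^\star} \setminus \bigcup_i N[v^{t^\star+1}_i]$, whose size differs from $|B_{t^\star+1}|$ only through the regrowth step, and use it as the coloring to which the lemma applies.

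At round $t^\star$, Lemma~\ref{lem:bimatching} furnishes a bicolored matching $M$ of size at least $6k$ between $B_{t^\star}$ and its complement. The key tree-theoretic observation I would use is that in a tree, any closed neighborhood $N[v]$ contains at most one entire edge of $M$: two disjoint edges of $M$ both contained in $N[v]$ would force two non-adjacent children of $v$ to be endpoints of a matching edge, which is impossible since siblings are non-adjacent in a tree. Coupling this with the propagation principle -- for a bicolored edge $\{u,w\}\in M$ with $u\in B_{t^\star}$, if $u$ is not cut then $w$ inevitably joins the bush after regrowth -- I would then show that $k$ cutters can ``fully neutralize'' at most $k$ of the $\geq 6k$ bicolored edges of $M$, so that at least $5k$ edges still contribute a fresh bush vertex to $B_{t^\star+1}$. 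This yields $|B_{t^\star+1}| \geq 5k > 0$ and, more importantly, preserves enough structure that I can iterate the argument, contradicting the hypothesis $|B_T|=0$.

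The main obstacle I anticipate is the ``partial coverage'' issue at the cut step: although each cutter covers at most one full edge of $M$, a single cutter's closed neighborhood may contain many \emph{bush endpoints} of distinct matching edges (all neighbors of $v_i$), and cutting them severs the corresponding propagation channels. Turning the structural restriction on $N[v]$ into a quantitative bound on the number of matching edges a single cutter can neutralize -- in a way compatible with the disjointness of $M$ and with the depth of the tree -- is the delicate step. A second subtlety is sustaining the invariant over many turns: the size window given by (\ref{eq:bimatching}) is only $O(k)$ wide while $|B_t|$ can fluctuate by $O(k^2)$ per turn, so the iteration likely has to track not just the round $t^\star$ but a carefully chosen sequence of coloring snapshots (possibly the post-cut, pre-regrowth sets) to which the lemma is applied at each stage.
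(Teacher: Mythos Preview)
Your proposal has a genuine gap, and in fact you have already put your finger on both places where it breaks. The paper's construction is \emph{not} $T^k_{6k}$ itself: it takes $T^k_{6k}$ and subdivides every edge twice, calling the original vertices \emph{regular} and the new degree-$2$ vertices \emph{subdivision} vertices. This single modification is what makes the argument go through, and without it both of your anticipated obstacles are fatal.

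First, in the subdivided tree any two regular vertices are at distance at least $3$, so the closed neighborhood $N[v]$ of any vertex contains at most one regular vertex. Hence $k$ cutters change the bush/clean status of at most $k$ regular vertices per round. Tracking only $m_t$, the number of clean regular vertices, one then has $m_{t+1}\le m_t+k$, and the last round with $m_t<n/2$ automatically satisfies $n/2-k\le m_t<n/2$, which is exactly the hypothesis~(\ref{eq:bimatching}) of Lemma~\ref{lem:bimatching} for $h=6k$. In your unsubdivided tree a single cutter clears $\Theta(k)$ vertices, so the count can jump by $\Theta(k^2)$ and skip the width-$k$ window entirely; your suggestion of passing to post-cut, pre-regrowth snapshots does not repair this, since those sets still move in $\Theta(k^2)$ increments from one round to the next.

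Second, and more seriously, your ``partial coverage'' worry is decisive in the raw tree: a cutter placed at a vertex $v$ clears all of $N[v]$, hence can kill the bush endpoint of up to $12k+2$ distinct matching edges simultaneously, and $k$ cutters can wipe out all $6k$ bicolored edges in a single round. Your observation that $N[v]$ contains at most one \emph{full} edge of $M$ is correct but irrelevant, since neutralizing an edge only requires cutting its bush endpoint. The subdivision turns each matching edge into a length-$3$ path between two regular vertices; over three consecutive rounds the cutters can touch at most $3k$ regular endpoints of these $6k$ disjoint paths, so on at least $3k$ of them the bush propagates from the bush endpoint back to the (formerly) clean endpoint. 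This gives $m_{t+3}\le m_t<n/2$, contradicting the choice of $t$ as the last such round. This three-round propagation argument is the mechanism you were missing; it replaces the one-step ``$k$ cutters neutralize $\leq k$ edges'' claim that fails without subdivision.
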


\begin{proof}
	Let $k, i \in \mathbb N$. Let $T_i^k$ be the complete $(12k+1)$-ary rooted tree of height $i$. The number of vertices of $T_i^k$ satisfies 
	$$|V(T_i^k)| = \frac{(12k+1)^{i+1}-1}{12k}.$$
	The tree $\mathcal{T}_k$ is constructed by subdividing twice each edge of $T_{6k}^k$. The vertices of $\mathcal{T}_k$ of degree different than 2 are said \emph{regular} vertices, while vertices of degree 2 --- \emph{subdivision} vertices.
	For regular vertices we will use a terminology of parents, children, ancestors and descendants (regarding corresponding $T_{6k}^k$ tree), while for the whole tree --- a terminology of neighbors.
	
Let $k\geq 1$. Let $n = |V(T_{6k}^k)|$ denote the number of regular vertices in $\mathcal{T}_k$. We show that $k$ bush-cutters cannot clean a tree $\mathcal{T}_k$. Suppose otherwise. Then there exists a strategy of cutting allowing to clean the tree. For each time step $t$ (before cutters move) denote by $m_t$ number of clean (without bush) regular vertices. 

Let now $t$ be the last round when, before cutters move, bush grows on more than a half of regular vertices, so this is the last moment when $m_t < \frac{n}{2}$. As at each time step cutters can cut at most $k$ regular vertices the number $m_t$ of clean regular vertices satisfies inequality $\frac{n}{2}-k \le m_t$.

By Lemma~\ref{lem:bimatching} (color vertices of $T_{6k}^k$ by 0 or by 1 whether corresponding regular vertices of $\mathcal{T}_k$ are bush or clean respectively), there exists a bicolored matching $M$ of size $6k$ in $T_{6k}^k$  (whose edges are between bush and clean vertices). In the tree $\mathcal{T}_k$, the edges of $M$ correspond to paths (say $M$-paths) of length 3 connecting bush and clean vertices with the property that each vertex belong to at most one such path.

In three consecutive rounds cutters can cut the bush on at most $3k$ endpoints of $M$-paths. While on at least $3k$ such an endpoints bush regrows. Hence, $m_{t+3} \le m_t < \frac{n}{2}$. A contradiction.
\end{proof}

From Propositions~\ref{prop:blind} and~\ref{prop:bush} and Theorem~\ref{theo:bush}, we get that:

\begin{corollary}
For any $k >0$, there exists a planar graph $G$ with treewidth $2$ (precisely, a tree plus a universal vertex) such that $\zeta(G)>k$. 
\end{corollary}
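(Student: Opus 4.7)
The proof I propose is a short assembly of the three results proved earlier in this section. Fix $k \geq 1$. First, invoke Theorem~\ref{theo:bush} to obtain a tree $\mathcal{T}_k$ with $B(\mathcal{T}_k) > k$. Proposition~\ref{prop:bush} then immediately gives $\zeta_b(\mathcal{T}_k) \geq B(\mathcal{T}_k) > k$, so no strategy of $k$ blind cops can localize the robber on $\mathcal{T}_k$.

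Next, let $G = \mathcal{T}'_k$ be the graph obtained from $\mathcal{T}_k$ by adding a fresh universal vertex $v$ adjacent to every vertex of $\mathcal{T}_k$. Proposition~\ref{prop:blind} gives $\zeta_b(\mathcal{T}_k) \leq \zeta(G)$, and chaining with the previous inequality yields $\zeta(G) \geq \zeta_b(\mathcal{T}_k) > k$, which is exactly the required bound.

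It remains to check that $G$ lies in the claimed graph class. Planarity holds because the tree $\mathcal{T}_k$ admits a planar embedding with every vertex on the outer face; placing $v$ in that face and joining it to all other vertices keeps the embedding planar. For the treewidth bound, start from an optimal tree decomposition of $\mathcal{T}_k$ (of width $1$, as every tree has treewidth $1$) and insert $v$ into every bag; each edge of $G$ is then covered by some bag, so the result is a tree decomposition of $G$ of width $2$. Since the substantive combinatorial work is already contained in Theorem~\ref{theo:bush} and the two supporting propositions, the only point that requires care at this stage is this structural verification of $G$, and there is no real obstacle beyond it.
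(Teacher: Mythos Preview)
Your proof is correct and follows exactly the route the paper takes: the corollary is stated immediately after Theorem~\ref{theo:bush} as a direct consequence of Propositions~\ref{prop:blind} and~\ref{prop:bush} together with that theorem, via the chain $\zeta(\mathcal{T}'_k) \geq \zeta_b(\mathcal{T}_k) \geq B(\mathcal{T}_k) > k$. You add an explicit verification that $\mathcal{T}'_k$ is planar and has treewidth~$2$, which the paper leaves implicit, but otherwise the argument is identical.
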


\section{Complexity}\label{sec:complexity}

In this section, we prove that the localization game %, both with distances 
(i.e., computing $\zeta$)
% or without (i.e., computing $\zeta^*$) 
is NP-hard.

%Here, we consider the game in which the Cop-player receives the distances between the vertices it has probed and the position of the robber. This game is called {\it localization game with distances}.

We first introduce some related problems. 
A set $L \subseteq V$ of vertices is called a {\it locating set} if, for every $u,v \in V \setminus L$, $N[u] \cap L \neq N[v] \cap L$. %The locating problem consists in finding a locating set of minimum size.
Note that a locating set must ``see" almost all vertices. Formally, for any locating set $L$, $|V \setminus N[L]| \leq 1$. Indeed, otherwise, there would be two vertices $u,v$ such that $N[u] \cap L = N[v] \cap L =\emptyset$.
A set $LD \subseteq V$ of vertices is called a {\it dominating-locating set} if $LD$ is a dominating set and, for every $u,v \in V \setminus LD$, $N[u] \cap LD \neq N[v] \cap LD$. %The dominating-locating problem consists in finding a dominating-locating set of minimum size.
By definition,  the minimum size of a  dominating-locating set is at least the minimum size of locating set. Moreover, by remark above the minimum size of a  dominating-locating set is at most one plus the minimum size of locating set. It is known that:

%In the paper [G\'erard COHEN, Iiro HONKALA, Antoine LOBSTEIN and Gilles Z\'eMOR, On Identifying Codes. Proceedings of DIMACS Workshop on Codes and Association Schemes '99, Vol. 56, pp. 97-109, January 2001.](see also [Ir\`ene CHARON, Olivier HUDRY and Antoine LOBSTEIN, Minimizing the Size of an Identifying or Locating-Dominating Code in a Graph is NP-Hard. Theoretical Computer Science, Vol. 290/3, pp. 2109-2120, 2003.]) it is shown that: 

\begin{theorem}~\cite{Cohen,Charon}
	Computing a minimum dominating-locating set is NP-hard.
\end{theorem}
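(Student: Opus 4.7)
The plan is to provide a polynomial-time reduction from a classical NP-hard problem to the decision version of minimum dominating-locating set. A natural source is \textsc{3-SAT} (or alternatively \textsc{3-Dimensional Matching} or \textsc{Dominating Set}); I would start with \textsc{3-SAT}. Given a formula $\phi$ with variables $x_1,\ldots,x_n$ and clauses $C_1,\ldots,C_m$, I would construct in polynomial time a graph $G_\phi$ and an integer $k$ such that $\phi$ is satisfiable iff $G_\phi$ admits a dominating-locating set of size at most $k$.

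The construction would use two families of gadgets. A \emph{variable gadget} for each $x_i$ consists of two ``literal'' vertices $T_i, F_i$ together with a few pendant or twin vertices whose sole purpose is to force any minimum dominating-locating set to pick exactly one of $T_i, F_i$ (because the forced vertices can only be dominated and mutually distinguished by taking one of the pair). This encodes a Boolean assignment. A \emph{clause gadget} for each $C_j=(\ell_1\vee \ell_2\vee \ell_3)$ introduces a vertex $c_j$ adjacent to the three literal vertices corresponding to $\ell_1,\ell_2,\ell_3$ (plus, if needed, one private distinguishing vertex). With the budget $k$ tuned so that the variable gadgets account for all but zero extra slots, the dominating condition on $c_j$ forces at least one of its literal neighbors into any solution, which is exactly the 3-SAT satisfaction condition.

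For the correctness: the forward direction (satisfying assignment yields a dominating-locating set of the target size) is a direct check that one picks the literal vertex agreeing with the assignment in each variable gadget and verifies that every non-solution vertex is dominated and has a distinct closed-neighborhood trace on the set. The reverse direction requires showing that any dominating-locating set of size at most $k$ must be of the above ``assignment form'', i.e., one literal per variable, with no cheaper alternative dominations; the clause constraint then implies that the induced assignment satisfies $\phi$.

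The main obstacle is the locating (separation) requirement: one must design the gadgets so that, for \emph{any} choice of literals across variable gadgets, every pair of non-chosen vertices is automatically separated by the solution, yet no budget-preserving non-assignment set can also separate the gadget's internal vertices. Concretely, one typically attaches small twin-breaking structures (pairs of pendants at differing distances, or short paths with tagged endpoints) that are cheaply separated by the intended assignment-type sets but not by any alternative of equal cardinality. Balancing these twin-breakers so that they neither inflate $k$ beyond control nor introduce dominating shortcuts through clause vertices is the delicate part; the rest of the argument is a careful case analysis on how a hypothetical cheap solution can intersect each gadget.
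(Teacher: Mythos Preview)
The paper does not prove this theorem at all: it is stated with a citation to \cite{Cohen,Charon} and used as a black box for the subsequent corollary. So there is no ``paper's own proof'' to compare your proposal against.

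As for your proposal itself, it is a reasonable high-level plan in the spirit of the original Charon--Hudry--Lobstein reduction, but it is not a proof. You have identified the correct architecture (variable gadgets forcing a choice of one literal, clause gadgets forcing domination by a true literal) and the genuine difficulty (ensuring the \emph{locating} condition holds for every assignment-type solution while simultaneously blocking any non-assignment solution of the same size). However, you do not actually specify the gadgets: phrases like ``a few pendant or twin vertices,'' ``if needed, one private distinguishing vertex,'' and ``small twin-breaking structures'' are placeholders, not constructions. The entire content of such a reduction lies in exhibiting concrete gadgets and verifying the two directions; you explicitly defer this to ``a careful case analysis'' that you do not perform. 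In particular, the backward direction---that any size-$k$ solution must be of assignment form---is typically the subtle part, and nothing in your sketch indicates how the gadgets enforce this rigidity.

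If you want to turn this into an actual proof, you should either reproduce (or adapt) the explicit gadget construction from \cite{Charon} and carry out the verification, or simply cite the result as the paper does.
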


We first deduce the following easy result

\begin{corollary}
	Computing a minimum locating set is NP-hard in the class of graphs with diameter $2$.
\end{corollary}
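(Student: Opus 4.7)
The plan is to reduce from the minimum dominating-locating set problem (just shown NP-hard). Given a graph $G$, I form $G'$ by adding a new vertex $v$ adjacent to every vertex of $V(G)$; then every pair of vertices of $G'$ lies at distance at most $2$ via $v$, so $G'$ belongs to the target class of diameter-$2$ graphs. Writing $\ell(H)$ for the minimum size of a locating set of $H$ and $\gamma^L(H)$ for the minimum size of a dominating-locating set of $H$, the key step would be to establish the two-sided estimate
\[
\gamma^L(G) - 1 \;\le\; \ell(G') \;\le\; \gamma^L(G) + 1.
\]

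The upper bound is the easy direction. If $L$ is an optimum dominating-locating set of $G$, then $L \cup \{v\}$ is locating in $G'$: for any two $u,w \in V(G)\setminus L$, their $(L\cup\{v\})$-signatures in $G'$ are $(N_G[u]\cap L)\cup\{v\}$ and $(N_G[w]\cap L)\cup\{v\}$, which differ since $L$ already distinguishes $u$ from $w$ in $G$. The lower bound is the heart of the proof. Starting from an optimum locating set $L^*$ of $G'$, I would set $L = L^* \cap V(G)$ and do a short case split on whether $v \in L^*$. In both cases, for any two $u,w \in V(G)\setminus L$, the intersections $N_{G'}[u]\cap L^*$ and $N_{G'}[w]\cap L^*$ differ only in their restriction to $L$, so $L$ is a locating set of $G$. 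Moreover, two vertices of $V(G)\setminus L$ that are both undominated by $L$ would share the same $L^*$-signature in $G'$ (namely $\emptyset$ when $v\notin L^*$, and $\{v\}$ when $v\in L^*$), contradicting that $L^*$ is locating. Hence at most one vertex $x$ is undominated; adjoining $x$ to $L$ (if any) yields a dominating-locating set of $G$ of size at most $|L^*|+1 = \ell(G')+1$.

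NP-hardness then follows from this estimate via a polynomial-time Turing reduction: a hypothetical polynomial-time algorithm for the minimum locating set on diameter-$2$ graphs, combined with standard self-reducibility, would produce an explicit optimum locating set $L^*$ of $G'$; the conversion above then yields a dominating-locating set of $G$ whose size, compared with the threshold $k$, decides the original instance. The main delicate point I would need to address is precisely the $\pm 1$ gap in the two-sided bound, which rules out a naive many-one reduction (as witnessed for instance by $G=K_2$, where $\ell(G')=\gamma^L(G)+1$, versus $G=P_3$, where $\ell(G')=\gamma^L(G)$); the Turing-reduction detour above is the cleanest way to bridge it, and once the explicit conversion from $L^*$ is in place, the hardness transfer is routine.
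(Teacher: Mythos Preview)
Your two-sided estimate $\gamma^L(G)-1 \le \ell(G') \le \gamma^L(G)+1$ is correct, and your case analysis producing a dominating-locating set $L'$ of $G$ from an optimal locating set $L^*$ of $G'$ is sound. The gap is in the final step: the conversion does \emph{not} in general yield an optimal dominating-locating set, so comparing $|L'|$ with the threshold $k$ does not decide whether $\gamma^L(G)\le k$. Concretely, take your own example $G=K_2$: here $\gamma^L(G)=1$, $G'=K_3$, $\ell(G')=2$, and $L^*=V(G)$ is a perfectly valid optimal locating set of $G'$ that self-reducibility may hand you; your conversion (case $v\notin L^*$) then returns $L'=V(G)$ of size $2$, not $1$. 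In general you only get $\gamma^L(G)\le |L'|\le \ell(G')+1\le \gamma^L(G)+2$, so if $|L'|\in\{k+1,k+2\}$ nothing is decided. More fundamentally, a $\pm 1$ sandwich between two optimization problems does not transfer NP-hardness: it would only show that a polynomial algorithm for the target yields a $\pm 1$ approximation of the source, and you have no inapproximability assumption on minimum dominating-locating set to rule that out.

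The paper sidesteps the additive slack entirely by giving two \emph{exact} many-one reductions. First, adding an isolated vertex $x$ to $G$ gives a graph $G''$ with $\ell(G'')=\gamma^L(G)$ exactly: a locating set of $G''$ can leave at most one vertex unseen, and if that vertex lies in $V(G)$ one swaps it for $x$. Second, to land in diameter~$2$, one adds to an arbitrary graph $H$ three vertices $u,v,w$ with $u$ universal and a single edge $vw$; this gadget forces exactly one of $v,w$ into any minimum locating set and keeps $u$ out, giving $\ell(H')=\ell(H)+1$ on the nose. Either of these ideas would repair your argument; as it stands, the Turing-reduction paragraph does not.
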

\begin{proof}
	We first show that computing a minimum locating set is NP-hard in general graphs. We present a reduction from the problem of computing a minimum dominating-locating set which is NP-hard by previous Theorem. Let $G$ be any graph and let $G'$ be the graph obtained from $G$ by adding to it an isolated vertex $x$. Then, $G$ has a locating-dominating set of size $k$ if and only if $G'$ has a locating set of size $k$. Indeed, 
	\begin{itemize}
		\item
		assume that $C \subseteq V(G)$ is a locating-dominating set of $G$, then it is clearly a locating set of $G'$ (where $x$ is the only vertex with $N[x] \cap C = \emptyset$). 
		\item
		On the other hand, let $C \subseteq V(G')$ be locating set of $G'$. If $C \cap V(G)$ is a dominating set of $V(G)$, then $C$ is a locating-dominating set of $G$. Otherwise, a unique vertex $v \in V(G)$ is not dominated by $C$. In that case, $x$ must belong to $C$ (otherwise we would have $N[x] \cap C= N[v] \cap C = \emptyset$). Therefore, $C \cup \{v\} \setminus \{x\}$ is a locating-dominating set of $G$ with same size than $C$.
	\end{itemize} 
	
	Now, let us show that computing a minimum locating set is NP-hard in the class of graphs with a universal vertex. The reduction is from the locating set problem which is NP-hard by previous paragraph. Let $G$ be any graph with at least two vertices, and let $G'$ be the graph obtained from $G$ by adding three new vertices $u,v,w$, the edge $\{v,w\}$ and making $u$ universal, i.e., adjacent to every vertex of $V(G) \cup \{v,w\}$. Then, $G$ has a locating set of size $k$ if and only if $G'$ has a locating set of size $k+1$. Indeed, 
	\begin{itemize}
		\item
		if $C \subseteq V(G)$ is a locating set of $G$, then $C \cup \{v\}$ is a locating set for $G'$. 
		\item
		Now, let $C \subseteq V(G')$ be a minimum locating set of $G'$. It is easy to check that exactly one of $v$ or $w$ must belong to $C$ (if none of them belongs to it, they cannot be distinguished, and if both of them belongs to $C$, one of them can be removed) and that $u$ does not (adding $u$ in $C$ would differentiate only itself while it is already identified by $v$ or $w$). W.l.o.g., we may assume that $C \cap \{u,v,w\} = \{v\}$. Therefore, it is also easy to check that $C \setminus \{v\}$ is a locating set for $G$.
	\end{itemize}
\end{proof}

We are now ready for our main result

\begin{theorem}
	The localization game %(with or without distances) 
	is NP-hard.
\end{theorem}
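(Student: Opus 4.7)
The plan is a polynomial-time reduction from the problem of computing a minimum locating set in graphs of diameter at most $2$, which was shown NP-hard in the previous corollary. The guiding principle is that in any diameter-$2$ graph, the distance vector $D(B)$ from a vertex $v$ to a probe set $B$ takes values in $\{0,1,2\}^{|B|}$ and is entirely determined by $N[v]\cap B$; hence a single-round probe of a set $B$ locates the robber if and only if $B$ is a locating set. This yields immediately $\zeta(G)\leq \ell(G)$, where $\ell(G)$ denotes the minimum size of a locating set: the Cop-player just probes a minimum locating set in the first round.

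For the matching lower bound $\zeta(G)\geq \ell(G)$ on the class of instances produced by the previous reduction (which all have a universal vertex $u$, hence diameter at most $2$), I would describe a survival strategy for the Robber-player facing $k<\ell(G)$ cops. She maintains an information set $S\subseteq V$ of vertices consistent with the cops' history, initialized to $V$, under the invariant $|S|\geq 2$ at all times. On each round the cops probe some $B$ with $|B|=k<\ell(G)$; since $B$ is not a locating set, the equivalence classes of $V\setminus B$ under the relation $x\sim y \iff N[x]\cap B = N[y]\cap B$ contain at least one class $C$ of size $\geq 2$. The robber places herself inside such a class, replacing $S$ by $S\cap C$, and between rounds $S$ is enlarged to $N[S]$ via her move.

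The main obstacle is to prevent the cops' past probes from progressively shrinking $S$ until no equivalence class meets $S$ in at least two elements. To defuse this, I would augment the reduction with a \emph{safe haven} gadget, namely two false-twin vertices $w,w'$ each adjacent only to the universal vertex $u$, so that the augmented graph $G^+$ still has diameter at most $2$. Since $w$ and $w'$ share the same closed neighborhood, no probe can distinguish them unless it contains one of them (costing one probe); since $u$ is universal, once $S$ contains $w$ or $w'$ the subsequent move places $u\in N[S]$, and the next move refreshes $S$ to all of $V$. A careful accounting then establishes a tight relation of the form $\zeta(G^+)=\ell(G)+O(1)$, from which NP-hardness of deciding $\zeta(G)\leq k$ in graphs of diameter at most $2$ follows by setting the threshold accordingly.

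The hardest technical step will be to certify that the robber can keep exploiting the safe haven round after round without her information set ever collapsing: this requires a combinatorial argument showing that any probe that threatens to split the surviving class of $S$ must either include one of $w,w'$ (wasting a probe and dropping the effective budget against $G$ below $\ell(G)-1$) or fail to locate among vertices dominating $B$ in $G^+$, leaving the robber a pair of indistinguishable refuges. The universality of $u$ and the diameter-$2$ property combine to ensure that this accounting closes, yielding the desired reduction and completing the proof.
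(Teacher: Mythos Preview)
Your overall framework coincides with the paper's: reduce from minimum locating set in diameter-$2$ graphs, using the observation that in diameter~$2$ a probe of $B$ reveals exactly $N[v]\cap B$, so a single round locates iff $B$ is a locating set. The divergence is in the gadget. The paper adjoins $n+1$ pairwise non-adjacent vertices $x_1,\dots,x_{n+1}$, each adjacent to \emph{every} vertex of $G$, and proves $\zeta(G')=\ell(G)+1$. You instead adjoin only two pendant twins $w,w'$ attached to the universal vertex $u$, aiming for $\zeta(G^+)=\ell(G)+O(1)$.

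As written, your argument has a genuine gap, and it is precisely the one you flag but do not resolve. The robber occupies a single vertex and may move only to a neighbour; she cannot ``place herself inside'' an arbitrary equivalence class $C$ and replace $S$ by $S\cap C$ unless she can physically reach $C$ in one step. Your haven vertices $w,w'$ are adjacent only to $u$, so a robber at $w$ can go nowhere except $u$ or $w$. To reach an indistinguishable pair inside $V(G)$ she must transit through $u$, but during that intermediate round the cops probe again and may well isolate $u$ (or isolate whatever vertex she moves to next). Your claim that ``the next move refreshes $S$ to all of $V$'' conflates the set $N[S]$ of vertices \emph{reachable} by the robber with the set of vertices \emph{indistinguishable} to the cops after the next probe; these are different sets, and the intersection can collapse to a singleton. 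The accounting you defer (``a careful accounting then establishes\ldots''; ``the hardest technical step will be\ldots'') is the entire content of the lower bound, and nothing in the sketch shows it closes.

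The paper's gadget sidesteps this difficulty structurally. Because each $x_i$ is adjacent to all of $V(G)$, the robber at any $x_i$ can move in one step to \emph{any} vertex of $G$; and because there are $n+1>k$ of them, at every round at least two $x_i$'s are unprobed and mutually indistinguishable (all probed $x_j$ see every vertex of $V(G)$ at distance~$1$, so they never separate vertices of $G$ either). The robber's strategy is then a clean three-case analysis with no accounting: from $V(G)$ jump to an unprobed $x_i$; from an $x_i$ with all probes in $V(G)$ stay put; from an $x_i$ with at least one probe among the $x_j$'s, fewer than $\ell(G)$ probes lie in $V(G)$, hence two vertices of $G$ are indistinguishable and the robber moves directly to one. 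If you want to salvage your construction, you would need your haven vertices to be adjacent to all of $V(G)$ (not just to $u$) and you would need enough of them that the cops cannot cover them all---which is exactly the paper's construction.
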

\begin{proof}
	We present a reduction from the problem of computing a minimum locating set in graphs with diameter $2$ which is NP-hard by previous corollary. 
	
	Let $G$ be any $n$-node graph with diameter $2$. Let $G'$ be the graph obtained from $G$ by adding $n+1$ pairwise non-adjacent vertices $x_1,\cdots,x_{n+1}$, each of them being adjacent to every vertex of $V(G)$. Note that, because $G$ has diameter $2$, then $G$ is an isometric subgraph of $G'$ (i.e., distances are preserved). Let $k$ be the minimum size of a locating set of $G$. We prove that $\zeta(G')=k+1$. 
	\begin{itemize}
		\item First, let us prove that $\zeta(G') \leq k+1$. Let $C$ be a locating set of $G$ with size $k$. The game will last at most $n+1$ turns. At turn $i$, the Cop-player probes the vertices in $C \cup \{x_i\}$. We show that the robber can never reach a vertex of $V(G)$ without being immediately caught. Therefore, the robber is stacked at its initial position in $\{x_1,\cdots,x_{n+1}\}$, say $x_j$. Hence, it will be caught at turn $j$. 
		
		First, at every turn $i$, the Cop-player can decide if the location $r$ of the robber belongs to $V(G)$ or not. Indeed, the robber is located at $\{x_1,\cdots,x_{n+1}\}$ if and only if the distance between $r$ and $x_i$ is strictly larger than the distance between $r$ and every other vertex of $C$. 
		
		Now assume that $r \in V(G)$ just before any turn $i$ and that $r \notin C$ (otherwise it is immediately caught). For every vertex $v \in C$, $r \in N(v)$ if and only if the distance between $v$ and $r$ equals the distance between $r$ and $x_i$. Since $C$ is a locating set (and recall that $G$ is isometric in $G'$), $r$ can be identified. 
		\item Finally, let us prove that $\zeta(G') \geq k+1$. For this purpose, let us describe an escape strategy for the robber when the Cop-player can probe at most $k$ vertices at each turn. Note that, since (obviously) $k<n$, at every turn, at least two vertices of $\{x_1,\cdots,x_{n+1}\}$ are not probed.
		\begin{itemize}
			\item If the robber is currently occupying a vertex of $V(G)$, then it goes to a vertex $x_i$ that will not be probed by the Cop-player (recall that we assume the worst case, i.e., the robber knows the whole strategy of the Cop-player). By the remark above (two vertices of $\{x_1,\cdots,x_{n+1}\}$ are not probed), the location of the robber cannot be identified.
			\item If the robber currently occupies a vertex in $\{x_1,\cdots,x_{n+1}\}$ and the Cop-player is about to probe $k$ vertices in $V(G)$, then the robber stays idle.
			\item Finally, consider the case when the robber occupies a vertex  in $\{x_1,\cdots,x_{n+1}\}$ and the Cop-player will probe $t \leq k-1$ vertices in $V(G)$ (and $k-t$ vertices in $\{x_1,\cdots,x_{n+1}\}$). Let $Y \subseteq V(G)$ be the set of vertices of $V(G') \setminus \{x_1,\cdots,x_{n+1}\}$ that will be probed. 
			
			Note that because the graph $G'$ has diameter $2$, for any probed vertex $v$, the distance between $v$ and the location $r$ of the robber only gives the information of whether $r=v$ or $r \in N(v)$ or $r \notin N[v]$ (similar information as given by a locating set). 
			
			Since $Y$ is not a locating set (because $|Y|<k$), at least two vertices $u$ and $v$ cannot be distinguished by $Y$ in $G$. Then, the robber goes to one of these vertices (note that the probed vertices in $\{x_1,\cdots,x_{n+1}\}$ bring no more information allowing to distinguish $u$ and $v$). 
		\end{itemize}
	\end{itemize}
\end{proof}

\section{Geometric localization game}\label{sec:euclidean}

Consider an infinite graph $G_{1}$ whose vertices are all points of the
plane with edges between points at Euclidean distance at most one. In other
words, $G_{1}$ is the intersection graph of all unit disks in the plane.
Notice that the graph distance between two vertices in $G_{1}$ is the least
integer not smaller than the Euclidean distance between the corresponding
points. In the first result we show that a countable number of cops is
not sufficient to localize the robber exactly on $G_{1}$. Recall that $\aleph _{0}$ denotes the cardinality of the natural numbers.

\begin{proposition}
$\zeta (G_{1})>\aleph _{0}$.
\end{proposition}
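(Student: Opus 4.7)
Plan. Suppose for contradiction that $\aleph_0$ cops suffice, and let $B \subseteq V(G_1) = \mathbb{R}^2$ be the (at most countable) set of all vertices ever probed during some play of their winning strategy. Since $d_{G_1}(x,v) = \lceil \|x-v\| \rceil$, each probe $v$ only reveals which half-open annulus $A_{n,v} = \{x : n-1 < \|x-v\| \leq n\}$ contains the robber; two positions $r, r'$ are thus cop-indistinguishable iff they lie in the same annulus around every $v \in B$. I will exhibit distinct $r, r' \in \mathbb{R}^2$ with this property. The robber then plays by starting at $r$ and never moving, so the cops see exactly the responses they would see for a virtual twin at $r'$ and can never uniquely identify his position---a contradiction.

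The construction is a diagonalization. Enumerate $B = (v_1, v_2, \ldots)$ and build inductively a decreasing sequence of closed discs $\overline{D_1} \supseteq \overline{D_2} \supseteq \cdots$ with $\overline{D_i} \subseteq A_{n_i, v_i}$ for some choice of $n_i$. If the radii $\rho_i$ stay bounded below by a positive constant, then by Cantor's nested-set theorem $\bigcap_i \overline{D_i}$ has positive diameter, and any two of its points form the required pair. At step $i$, the sole obstruction to placing $\overline{D_i}$ inside one annulus of $v_i$ is that the integer-radius circles $\{x : \|x - v_i\| \in \mathbb{Z}\}$ may cross $\overline{D_{i-1}}$; since these are one-dimensional sets in the plane, a transverse shift together with a small controlled shrinkage of $\overline{D_{i-1}}$ yields a valid $\overline{D_i}$ sitting inside one of the adjacent open annuli (and also avoiding $v_i$ itself, in case $n_i\geq 1$ and $v_i \in \overline{D_{i-1}}$).

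The main obstacle is keeping the radii from collapsing to zero over the infinitely many steps, especially because $B$ may be dense in $\mathbb{R}^2$ and thus probes can accumulate inside successive discs. The plan is to exploit the two-dimensional freedom of $\mathbb{R}^2$: at each step, the new probe imposes only a single one-parameter radial constraint on the set of admissible disc centers, so a two-parameter family of valid centers survives inside $\overline{D_{i-1}}$. A summable choice of shrinkages (e.g., radius decrease by a factor $1-2^{-i-1}$ at step $i$, so that $\prod_i (1-2^{-i-1}) > 0$) together with the freedom to shift transversely then preserves a positive limiting radius. Striking this balance between honoring every successive annulus constraint and preserving positive radius is the technical heart of the argument, and is where the uncountability of $\mathbb{R}^2$ relative to the countability of $B$ ultimately manifests itself.
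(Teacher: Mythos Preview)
Your argument has two related gaps.

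First, the set $B$ of ``all vertices ever probed during some play'' is not well-defined before the robber's position is fixed. The cops' strategy is adaptive: which vertices are probed in round $t+1$ depends on the distance vector returned in round $t$, which in turn depends on where the robber actually sits. You want to build $r\in\bigcap_i \overline{D_i}$ from an enumeration of $B$, but $B$ itself depends on $r$, so the construction is circular. Any rescue would have to proceed round by round: once the responses of round $t$ are fixed, the probes of round $t+1$ are determined, and only then can those probes be enumerated.

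Second, and more seriously, the summable shrinkage you assert is not available. In a single round the cops may probe countably many points simultaneously; take for instance $B_1=\mathbb{Q}\times\{0\}$. For any closed disc $D$ of positive radius with centre $p$, the set $\{\|p-(q,0)\|:q\in\mathbb{Q}\}$ is dense in a half-line, so for some $q$ an integer-radius circle about $(q,0)$ crosses $D$. Hence no cell of the partition induced by $B_1$ contains an open disc at all. In your one-probe-at-a-time language: as you process $(q_1,0),(q_2,0),\ldots$ you are forced to shrink infinitely often by factors bounded away from $1$, and the product of the shrinkage factors over all of round one is $0$. Thus you cannot produce two distinct points $r,r'$ returning identical answers even to the first round, and the ``two-dimensional freedom'' heuristic does not overcome this: each new probe removes a one-dimensional arc, but there are infinitely many such arcs in one round and they may be dense in your current disc.

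By contrast, the paper's proof makes no geometric construction whatsoever. It argues purely by cardinality: the countably many integer-radius circles around the probed points partition the current (uncountable) candidate set $R$ into pieces, and since there are only countably many circles, some piece must still be uncountable; the robber simply declares itself to be in that piece. No open disc, and no control on diameters, is ever claimed.
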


\begin{proof}
We will show that after each round there are uncountably many possible locations for the Robber-player.
Let $C$ be the set of probed points. For any $c\in C$ consider a set of all circles with center $c$ and integer radius. Such circles divide the plane into regions according to their distance from $c$. 
Let us denote the set of possible positions of the Robber-player by $R$. Assume that after the previous rounds of the game there are uncountably many points in $R$ (which is true at the beginning of the game). Consider the partition of $R$ created by all such circles around all points from $C$.
Since there are countably many circles, at least one region of the partition must contain uncountably many points. Hence if Robber chooses to stay in that region he will not be localized.
\end{proof}

In view of the above result we consider a relaxed version of the game in
which Cop measures the actual Euclidean distance to robber's location. We
will call it the\emph{\ geometric localization game} on the plane.

\begin{theorem}
The following statements hold for the geometric localization game on the
plane:

\begin{enumerate}
\item Three cops can win in one round.

\item Two cops can win in two rounds.

\item One cop cannot win in any number of rounds.
\end{enumerate}
\end{theorem}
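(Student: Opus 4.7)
My plan is to handle the three statements in order. For (1), I would have the cops probe any three non-collinear points $a,b,c$; the observed distances place the robber on three circles, and a standard trilateration argument (any two of the circles meet in at most two points, which are mirror images through the line joining the two centers, so the third non-collinear center distinguishes them) immediately pins down the robber. For (2), in round one the two measured distances restrict the robber to at most two candidate positions $p$ and $q$; by the time round two occurs, the robber lies in the bounded region $D(p,1)\cup D(q,1)$, where $D(x,r)$ denotes the closed Euclidean disk of radius $r$ around $x$. I would then pick a line $\ell$ disjoint from this region (which exists since the region is bounded) and place the two round-two probes at distinct points $c,d\in\ell$. Any two points equidistant from both $c$ and $d$ are mirror images through $\ell$, so the map $x\mapsto(|x-c|,|x-d|)$ is injective on each open half-plane of $\ell$; since $D(p,1)\cup D(q,1)$ lies in one such half-plane, the round-two distances uniquely determine the robber's position.

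For (3) I would exhibit, against any fixed deterministic cop strategy, an adversarial robber strategy that keeps the set of positions consistent with the cop's observations infinite forever. Writing $d_s=|v_s-r_s|$ and letting $C_t$ denote the set of points the robber could currently be occupying given the history---that is, the set of $x\in\R^2$ admitting a trajectory ending at $x$ that respects the movement constraint $|r_{s+1}-r_s|\le 1$ and reproduces all observed answers $d_1,\dots,d_t$---one has the recursion $C_{t+1}=(C_t+D(0,1))\cap\{x:|x-v_{t+1}|=d_{t+1}\}$, where $+$ denotes the Minkowski sum. The invariant I would maintain is that $C_t$ contains a rectifiable arc of positive length. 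Initially $C_1$ is an entire circle, so the base case holds. For the inductive step, $C_t+D(0,1)$ contains an open $2$-dimensional sausage around the arc, and the intersection of this sausage with a circle of radius $\rho$ around $v_{t+1}$ is a $1$-dimensional union of arcs of positive length except for a measure-zero set of tangency values of $\rho$. Since the robber knows $v_{t+1}$ in advance from the cop's strategy, it picks an actual position $r_{t+1}$ inside the sausage whose distance to $v_{t+1}$ avoids these tangency values, yielding a new $C_{t+1}$ that again contains a positive-length arc.

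The main obstacle is part (3): the delicate point is to verify that the robber can always choose a position $r_{t+1}$ meeting both requirements simultaneously---reachable in one unit step from some consistent previous position (so the trajectory is feasible), and with distance to $v_{t+1}$ that is noncritical (so $C_{t+1}$ remains $1$-dimensional). The bad set of distances has measure zero while the set of feasible $r_{t+1}$ has positive $2$-dimensional area, so almost every feasible choice works; turning this measure-theoretic abundance into a clean inductive proof (in particular, checking that the sausage really has positive area at each step and that the relevant tangency set is measure zero) is the core of the argument. Once the invariant is maintained, $C_t$ is infinite for every $t$, and hence one cop can never localize the robber.
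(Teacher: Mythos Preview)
Your arguments for parts (1) and (2) are essentially identical to the paper's: three non-collinear probes for trilateration, and for two cops a first-round pinning to two candidates $p_1,p_2$ followed by a second round with both probes on a line having $D(p_1,1)\cup D(p_2,1)$ entirely on one side.

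For part (3) your approach is correct but heavier than necessary, and it differs from the paper's. The paper short-circuits the bookkeeping by having the robber voluntarily announce his \emph{previous} position $p$ at the start of each round; this only strengthens the cop, so if the robber still evades, he evades in the original game. With that concession the consistent set collapses each round to the single unit disk $D=D(p,1)$, and the one-line argument is: the robber sits at any point of the \emph{open} disk different from the probed point $c$, so the circle about $c$ through the robber meets $D$ in an arc of more than one point. No induction on $C_t$, no Minkowski sums, no tangency analysis.

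Your inductive scheme also works, but two remarks are worth making. First, the measure-theoretic discussion of ``tangency values'' is superfluous: since $r_t\in C_t$, the open ball $B(r_t,1)$ lies in the interior of $C_t+D(0,1)$; choosing any $r_{t+1}\in B(r_t,1)\setminus\{v_{t+1}\}$ already guarantees that the circle $\{x:|x-v_{t+1}|=d_{t+1}\}$ passes through an interior point of the sausage, hence meets it in a relatively open (positive-length) arc. No exceptional radii need to be avoided. Second, your phrasing ``reachable in one unit step from \emph{some consistent} previous position'' is imprecise: the robber must move from his \emph{actual} position $r_t$, not from an arbitrary point of $C_t$. Since $r_t\in C_t$ this does not break the proof, but it is worth stating cleanly.
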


\begin{figure}[htb]
\center
%pgedited
\includegraphics[width=0.7\textwidth]{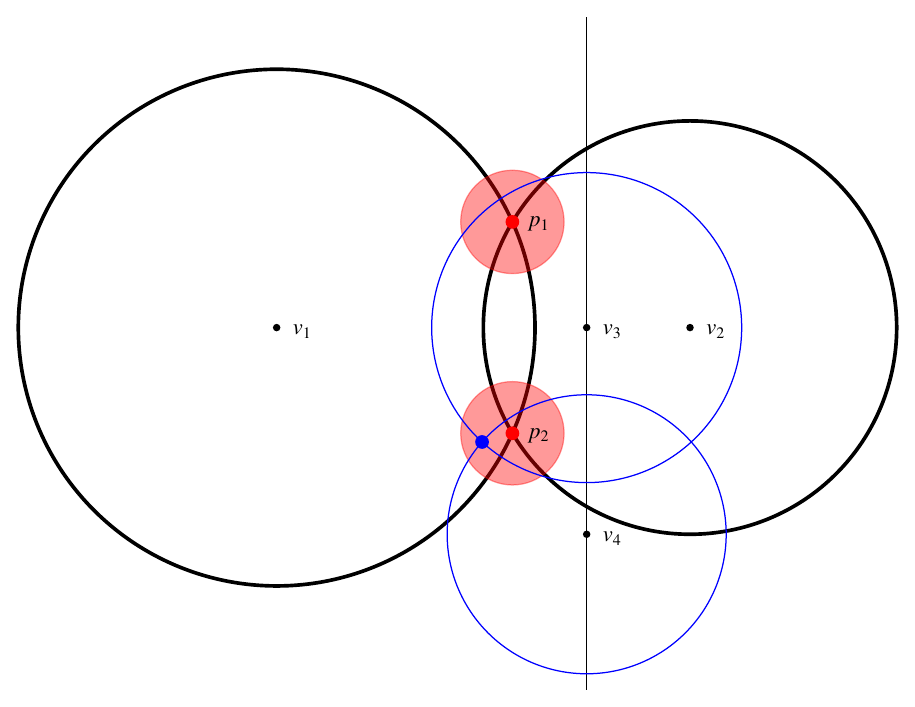}
\caption{The winning strategy in two moves for the Cop }\label{cop2}
\end{figure}

\begin{proof}
For the first statement, the winning strategy for the Cop-player is to choose three
noncolinear points. Then for every possible location of the robber, the
sequence of distances is unique, so he is immediately localized.

For the second statement, the winning strategy (presented in Figure \ref%
{cop2}) for the Cop-player is the following:
\begin{itemize}
\item
In the first round, the Cop-player checks any two points $v_{1}$ and $v_{2}$. If the robber is not on one of these points (which would end the game), then
each of $v_{1}$ and $v_{2}$ gives a circle on which the robber is located. If these cycles
intersect in only one point, this is the location of the robber and the Cop-player wins.
Otherwise, the circles intersect in two points: $p_{1}$ and $p_{2}$, and the 
Cop-player needs the second round to win.
\item
In second round, the Cop-player already knows that the robber was in one of the two
possible locations $p_{1}$, $p_{2}$, and he moved by at most one, hence he
is in one of the two unit disks with centers in $p_{1}$ and $p_{2}$ (the
disks may have nonempty intersection). Now, the Cop-player can draw a line such that
both disks lie entirely on the same side of the line. Hence the robber is on
that side of the line. Then, the Cop-player  chooses any two points on the line $v_{3}$
and $v_{4}$ and asks for their distances to the robber. Again she gets two
circles on which the robber is located, which intersect in at most two
points. But now only one of those points is on the same side of the line as
the disks, and that point is the location of the robber.
\end{itemize}

To prove the third assertion of the theorem we will show that the robber has a
winning strategy even if, at the beginning of each round, he tells the Cop-player  his
previous location $p$. That implies that, at the beginning of each round, the Cop-player
knows that the current location of the robber is in a unit disk $D$ centered in $%
p$. Then, the Cop-player chooses a vertex $c$ to check and the robber makes his move. Note
that, as long as the robber avoids point $c$, the distance $d$ between $c$ and
the robber is greater than zero. This gives a circular arc (intersection of a
circle with $D$) with center in $c$ and radius $d$, where the robber must be
located. If the robber stays in the interior of $D$, then the arc will consist
of more than one point. Hence the robber will not be located.
\end{proof}

From the point of view of practical applications it is also natural to
consider approximate version of the geometric localization game, where the Cop-player
is satisfied with determining the robber's position up to some small error.

\begin{figure}[t]
\center
\includegraphics[width=0.7\textwidth]{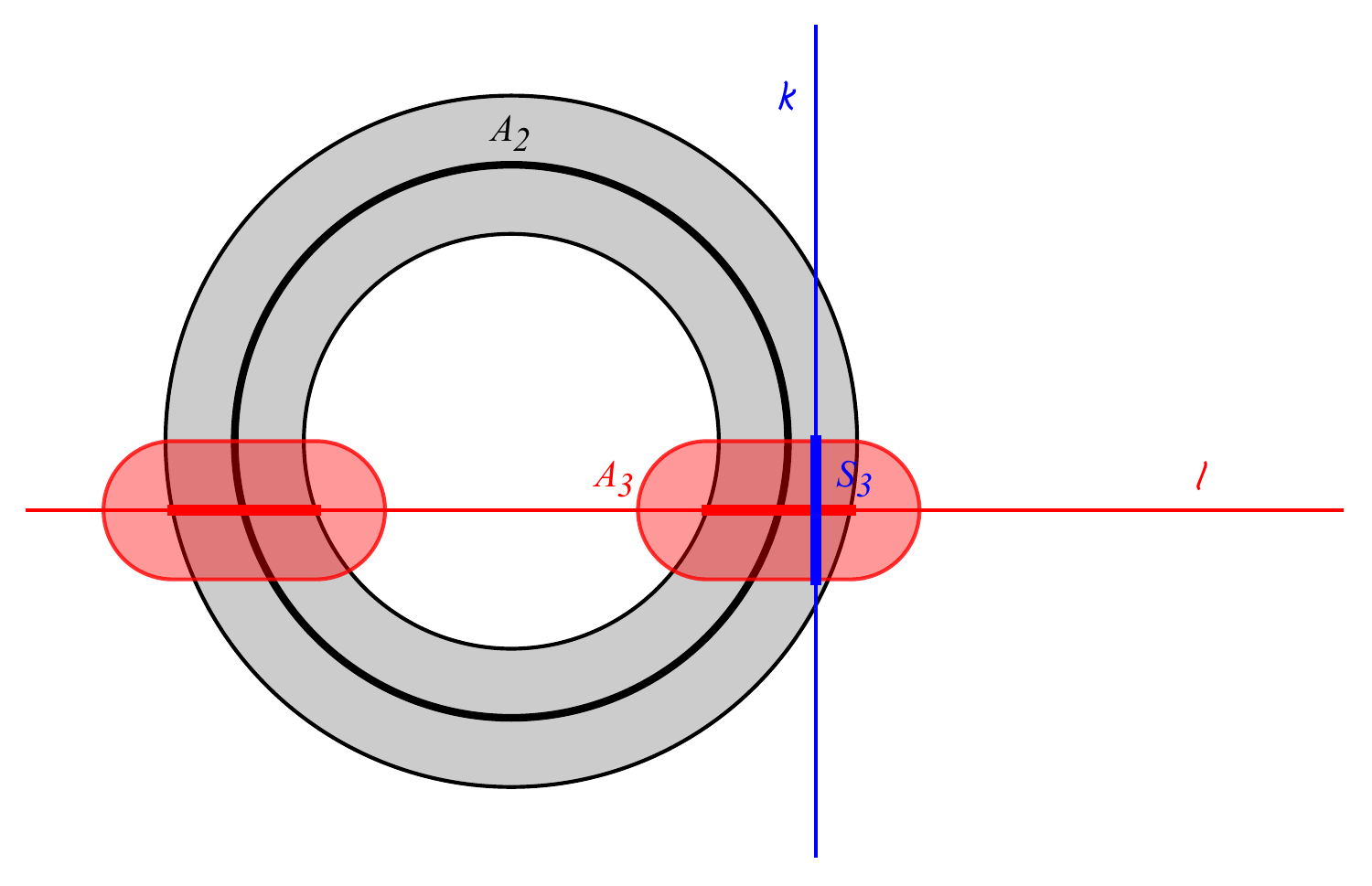}
\caption{Locating the robber on an arc close to a short segment.}\label{1eps}
\end{figure}

\begin{theorem}
For any $\varepsilon >0$, one cop can locate the robber with error of at
most $1+\varepsilon $. In other words one cop can determine a disk of radius $1+\varepsilon $ in which the robber is contained. 
\end{theorem}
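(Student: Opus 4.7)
The plan is to give an explicit cop strategy using a few probes that reduces the set of possible current robber positions to a region contained in a disk of radius $1+\varepsilon$.

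Round 1: the cop probes $c_1=(0,0)$ and learns $d_1=|r_1|$, so the robber's initial position lies on the circle $C_1$ of radius $d_1$ around the origin. If $d_1\le 1+\varepsilon$, she outputs the disk of radius $d_1$ around the origin and wins. Otherwise she continues.

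Round 2: the cop probes $c_2=(L,0)$ for a very large $L$ (to be fixed in terms of $d_1$ and $\varepsilon$) and learns $d_2$. Writing $r_2=(x_2,y_2)$ for the robber's position after the first move, expanding $(x_2-L)^2+y_2^2=d_2^2$ gives $x_2=(|r_2|^2+L^2-d_2^2)/(2L)$, and the movement constraint $|r_2|\in[d_1-1,d_1+1]$ pins $x_2$ to an interval of length $2d_1/L$. Therefore the set $R_2$ of possible current positions lies inside the intersection of the circle around $c_2$ of radius $d_2$ with a thin vertical strip, and consists of at most two arcs symmetric across the $x$-axis.

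Round 3: the cop probes $c_3=(\alpha,M)$ with $\alpha\ne 0$ small (to break the $x$-axis reflection symmetry) and $M$ very large, and learns $d_3$. An analogous computation pins $y_3$ to an interval of length $O(d_1/M)$, and the asymmetric choice of $\alpha$ combined with $|r_3-r_2|\le 1$ lets the cop decide which of the two candidate $x_3$-values on the circle around $c_3$ of radius $d_3$ is correct. The resulting set $R_3$ is contained in an arc of length at most $2+\delta$, where $\delta\to 0$ as $L,M\to\infty$; choosing $L,M$ large enough that $\delta<2\varepsilon$ makes $R_3$ fit inside a disk of radius $1+\varepsilon$ centered at the midpoint of its bounding chord, which the cop then outputs.

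The main technical obstacle will be the quantitative geometric bookkeeping bounding the arc lengths in Rounds 2 and 3 as functions of $L$, $M$, $d_1$, and $\varepsilon$, and verifying that the third probe correctly disambiguates the two symmetric copies of $R_2$; in the degenerate case where the arcs of $R_2$ lie very close to the $x$-axis, an additional round probing from a perpendicular direction may be needed before the arguments go through.
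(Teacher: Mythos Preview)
Your approach is essentially the paper's: probe from very far away so that the resulting circle is almost a straight line near the current feasible region, thereby pinning one coordinate per round. The paper phrases this as ``choose $c_i$ on the line through the two farthest points of $A_i$, arbitrarily far from $A_i$,'' which for $c_1=0$ and $c_2=(L,0)$ amounts exactly to your round-2 computation, and then in round~3 it probes far away along the line $l$ to which $S_2$ is $\delta$-close---your $c_3=(\alpha,M)$ with $M$ large plays the same role.

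Your round-3 bookkeeping is more tangled than necessary, and the ``asymmetric $\alpha$'' disambiguation does not work as written: the two candidate $x$-values on $C_3$ for a given $y_3$ are $\alpha\pm|x_3-\alpha|$, and when $|x_2-\alpha|\le 1$ \emph{both} may lie in $[x_2-1,x_2+1]$, so a small nonzero $\alpha$ does not separate them. Fortunately no disambiguation is needed at all. Since $R_2'\subseteq\{x\in[x_2-1-\varepsilon_2,\,x_2+1+\varepsilon_2]\}$ with $\varepsilon_2=O(d_1/L)$, and the only part of $C_3$ meeting this bounded region is the single lower arc $y=M-\sqrt{d_3^{\,2}-(x-\alpha)^2}$, the set $R_3\subseteq C_3\cap R_2'$ is automatically contained in one arc of $x$-extent at most $2+2\varepsilon_2$ and $y$-extent $O(1/M)$, hence in a disk of radius $1+\varepsilon$ for suitable $L,M$. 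This also dispenses with your ``degenerate case'' worry: no extra round is required, and three probes suffice, exactly as in the paper.
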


\begin{proof}
Let $\varepsilon >0$ and $\delta$ such that $0<\sqrt{(1+\delta )^{2}+\delta ^{2}} \leq 1+\varepsilon$.
%$0\leq \delta \leq \sqrt{\frac{\varepsilon ^{2}}{2}%
%+\varepsilon +\frac{1}{4}}-\frac{1}{2}$. 
We will show that, in three rounds,
the Cop-player will know that the robber is on a single arc which is arbitrarily close to a
segment of length at most $2+2\delta $ (see Figure \ref{1eps}). Note that, in
the end of the $i$th round, the Cop-player knows that the robber is in the intersection of a
circle (with center in the checked vertex $c_{i}$ and radius equal to its
distance from the robber) and an area $A_{i}$, which she predicted to contain
the robber (with prediction being based on previous rounds). Hence, the robber is in the
set $S_{i}$ which is a collection of circular arcs. It is easy to see that 
\begin{equation*}
A_{i+1}=\{z\in \mathbb{R}^{2}:\mathrm{dist}(z,s)\leq 1\text{ for some }s\in
S_{i}\}\text{.}
\end{equation*}
The strategy for the Cop-player is to draw a line through two points of $A_{i}$ which
are at the largest possible distance from each other. Then in the next round
to choose a vertex $c_{i}$ from the line arbitrarily far from $A_{i}$. 

After first round, the Cop-player knows that the robber is on a circle and predicts that in
the next round he must be in an annulus $A_{2}$. In the second round, she
probes a vertex $c_{2}$ in the unbounded face of $\mathbb{R}^2 \setminus A_2$. The intersection $S_{2}$
of received circle and $A_{2}$ will consist of either one or two arcs.
Notice that the further the chosen vertex $c_{2}$ is from $A_{2}$, the
closer the arcs from $S_{2}$ are to be segments. In other words, we can find
a line $l$ such that for every point $x\in S_{2}$ there exists $y\in l$ with 
$\mathrm{dis}(x,y)<\delta $. Hence every point from 
\begin{equation*}
A_{3}=\{z\in \mathbb{R}^{2}:\mathrm{dist}(z,s)\leq 1\text{ with }s\in S_{2}\}
\end{equation*}
is at distance at most $1+\delta $ from $l$ (by the triangle inequality).%

%Then in the third round Cop finds two points $p,\ q$ from $A_3=\{z\in \mathbb{R}^2:\ \exists s\in S_2\  dist(z,s)\leq 1 \}$ with the biggest possible distance
%% (in case of two arcs: each of them at distance one from one of the arcs - each from different arc)
%, draws a line $l$ through them and chooses a vertex $c_3$ to be "far" from $A_3$ and on the line $l$.

Then in the third round, the Cop-player chooses a vertex $c_{3}$ to be 'far' from $A_{3}$
and on the line $l$. The intersection $S_{3}$ of $A_{3}$ and the resulting
circle is one arc. Again, if $c_{3}$ is far enough from $A_{3}$, then we can
find a line $k$ such that for every point $x\in S_{3}$ there exists $y\in k$
with $\mathrm{dis}(x,y)<\delta $. Moreover we can choose $k$ to be
perpendicular to the line $l$. Let $p$ be the point of the intersection of $%
k $ and $l$. Let $k^{\prime }$ be the intersection of $k$ and $A_{3}$. Then
the distance between $p$ and any point of $k^{\prime }$ is at most $1+\delta 
$. Hence the distance from $p$ to any point of $S_{3}$ (which contains
the robber's current location) is at most $\sqrt{(1+\delta )^{2}+\delta ^{2}}%
\leq 1+\varepsilon $. So, localizing the robber in $p$ is correct up to an error
of at most $1+\varepsilon $.
\end{proof}

\section{Further Work}\label{sec:conclusion}

In this paper, we have introduced the localization game (also introduced independently in~\cite{Haslegrave}). % and other related turn-by-turn two-player games in graphs.
 Our study focused on upper bounds of $\zeta$ in planar graphs and computational complexity of this parameter in general graphs. Many questions remain open such that determining close formulas or bounds on $\zeta$ in other graph classes (e.g., outer-planar graphs, hypercube, partial cubes, chordal graphs, etc.). The question of the computational complexity of $\zeta$ in various graph classes such as bipartite graphs, split graphs (where computing the metric dimension is known to be NP-complete), bounded treewidth graphs, etc. is also of interest. Finally, most of the interesting turn-by-turn two-player games are known to be PSPACE-hard or even EXPTIME-complete. The exact status of the complexity of the localization game is still open.

It is also interesting to compare parameter $\zeta(G)$ to other graph invariants. For instance, it was proved in \cite{Johnson} that every graph with $\zeta(G)= 1$ is four colorable. This leads to the following conjecture.

\begin{conjecture}
	There is a function $f$ such that every graph with $\zeta(G)\leq k$ satisfies $\chi(G)\leq f(k)$.
\end{conjecture}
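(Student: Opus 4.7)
The plan is to prove the conjecture by contrapositive: assuming $\chi(G) > f(k)$, exhibit a winning strategy for the robber against any team of $k$ cops, for a suitably chosen $f$. Equivalently, one aims to show that the class $\{G : \zeta(G) \leq k\}$ is $\chi$-bounded.

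The first step would be a one-round distinguishability lemma. Given a probe $B = \{v_1,\dots,v_k\}$, call two vertices $B$-\emph{twins} if they have identical distance vectors to $B$; the cops cannot separate $B$-twins in a single turn. The target lemma would read: there is a function $g$ such that, in any graph $G$ and for any probe $B$ of size $k$, there is an induced subgraph $H \subseteq G$ of pairwise $B$-twins with $\chi(H) \geq g(\chi(G), k)$, where $g(c, k) \to \infty$ as $c \to \infty$. The idea is to partition $V(G)$ into the at most $(\mathrm{diam}(G)+1)^k$ distance classes relative to $B$ and invoke pigeonhole on chromatic number. Two degenerate regimes must be handled separately: small diameter, where distances carry essentially only binary information and one must argue via common-neighborhood Ramsey theory; and large diameter, where high $\chi$ should instead supply long highly-branching subgraphs within which the lemma can be proved.

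Second, I would thread this lemma through the adaptive multi-turn game. The robber maintains after each turn a set $R_t$ of positions consistent with the observed distance vectors, together with an induced subgraph $H_t \supseteq R_t$ of still-large chromatic number. Given the probe $B_{t+1}$, the robber applies the lemma inside $H_t$ to extract a chromatically rich subgraph of $B_{t+1}$-twins, then moves so as to preserve at least two indistinguishable candidates; the new $H_{t+1}$ is taken inside the closed neighborhoods of the surviving candidates. Provided $\chi(H_t)$ decreases by at most a factor bounded in $k$ per round, choosing $f(k)$ to grow fast enough guarantees $\chi(H_t) \geq 2$ forever, so the robber is never located and $\zeta(G) > k$.

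The main obstacle is precisely this compounding of losses across turns: $\chi(H_t)$ drops both from the distance-class refinement and from the neighborhood restriction forced by the robber's move, and the combined effect over many rounds may be catastrophic. Controlling it seems to require either a substantially stronger simultaneous-distance partition lemma than the one sketched above, or a genuinely new potential function that captures the chromatic content of the candidate set together with its \emph{mobility} through the graph. I expect this to be the technical heart of any proof, and it is not addressed by the tree-decomposition or ad hoc arguments developed elsewhere in the paper; even the base case $k=1$, established in \cite{Johnson}, appears to rely on structural facts specific to graphs with $\zeta(G)=1$ that do not straightforwardly generalize.
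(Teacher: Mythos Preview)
This statement is stated in the paper as a \emph{conjecture}, not a theorem; the paper offers no proof and explicitly lists it among open problems for future work. There is therefore nothing in the paper to compare your attempt against.

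As for the merits of your plan itself: the one-round lemma you sketch is essentially correct and can be made precise. For a single probed vertex $v$, edges of $G$ lie within a BFS layer or between consecutive layers, so some layer has chromatic number at least $\chi(G)/2$; iterating over $k$ probes yields a class of $B$-twins with chromatic number at least $\chi(G)/2^{k}$. The difficulty you identify in the final paragraph is, however, fatal to the argument as stated, and your own second paragraph does not escape it. You write that ``choosing $f(k)$ to grow fast enough guarantees $\chi(H_t)\geq 2$ forever'', but if $\chi(H_t)$ shrinks by any fixed factor $c_k>1$ per round, then $\chi(H_t)\leq \chi(G)/c_k^{\,t}$ drops below $2$ after roughly $\log_{c_k}\chi(G)$ rounds no matter how large $\chi(G)$ is. The robber must survive \emph{infinitely} many rounds, so no finite initial chromatic number can absorb a geometric loss. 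What would actually be needed is a potential that does not decay at all in the long run --- for example, a recurrent structure to which the robber can return and ``reset'' the potential --- and nothing in your outline supplies one. Your last paragraph correctly diagnoses this as the heart of the matter; the point is simply that the proposal is a plan with its central step missing, not a proof, and to the best of current knowledge the conjecture remains open.
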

Another problem for future studies with more geometric flavor concerns unit disks graphs.
\begin{conjecture}
There is a function $f$ such that every unit disk graph satisfies $\zeta
(G)\leq f(\omega (G))$.
\end{conjecture}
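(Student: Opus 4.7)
The plan is to exploit the geometric realization of unit disk graphs together with a packing bound derived from the clique number. Fix a unit disk graph $G$ with $\omega(G)\le k$ and identify each vertex with its disk center in the Euclidean plane. The key combinatorial ingredient is a \emph{packing bound}: any set of centers contained in a Euclidean region of diameter at most $2$ has pairwise-intersecting disks and hence induces a clique in $G$, so at most $k$ centers fit in such a region. Covering an arbitrary Euclidean disk of radius $R$ by $O(R^{2})$ cells of diameter $\le 2$ yields that such a disk contains at most $C R^{2} k$ vertex-centers, for an absolute constant $C$.

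The second ingredient is the relation $d_{E}(u,v)\le 2\,d_{G}(u,v)$ that holds in every unit disk graph. If the Cop-player probes a vertex $w$ and receives the response $d=d_{G}(r,w)$, then the robber is confined to the Euclidean disk $B(w,2d)$. Intersecting the information coming from several beacons confines $r$ to an intersection of Euclidean disks.

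The proposed strategy uses $f(k)$ cops in two phases. In a \emph{localization} phase, a small number of \emph{beacon} probes at geometrically well-spread vertices restrict the robber to an intersection of Euclidean disks of bounded Euclidean diameter; by the packing lemma this region contains at most $O(k)$ candidate vertices. In a subsequent \emph{resolution} phase, $O(k)$ further probes --- at least one for each remaining indistinguishable pair of candidates, chosen analogously to a locating set on the local subgraph --- uniquely identify the robber's vertex. The total number of cops needed at any single round is then $f(k)=O(k)$, and one argues by induction on rounds that the cop team eventually wins.

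The main obstacle is that the robber moves between rounds, drifting by up to Euclidean distance $2$, while the graph-distance query gives no lower bound on Euclidean distance better than $d_{E}\le 2 d_{G}$: $d_{G}$ can be arbitrarily larger than $d_{E}/2$ when long ``graph detours'' are present. Hence the localization phase shrinks the candidate region only slowly, and one must reselect beacons \emph{inside} (or on the boundary of) the current candidate region at each round and show that the process converges despite the robber's mobility. A secondary subtlety is handling degenerate beacon configurations --- for instance three nearly collinear beacons, or beacons whose graph-distances fail to distinguish reflected positions of $r$ --- where $\omega(G)\le k$ must be used once more to guarantee a sufficiently rich supply of well-spread probe vertices. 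Assuming both difficulties can be surmounted, the argument would give a bound polynomial in $k$.
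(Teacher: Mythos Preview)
The statement you are attempting to prove is presented in the paper as a \emph{conjecture}, in the Further Work section; the paper offers no proof, and to my knowledge the question is open. So there is no paper proof to compare your proposal against --- the relevant question is whether your sketch could be completed into an actual proof.

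As written, it cannot. You yourself identify the central gap in your final paragraph and then simply assume it away (``assuming both difficulties can be surmounted''). The difficulty is real and is precisely what makes the conjecture nontrivial. A graph-distance query $d_G(r,w)=d$ yields only the one-sided bound $d_E(r,w)\le 2d$; there is no useful lower bound on $d_E$ in terms of $d_G$, because unit disk graphs can contain arbitrarily long graph-detours between Euclidean-close points. Consequently, intersecting a bounded number of Euclidean balls $B(w_i,2d_i)$ need not produce a region of bounded Euclidean diameter: if the probed beacons happen to lie at large graph-distance from the robber (which the Cop cannot prevent, since she does not know where the robber is), all of the balls are huge and their intersection tells you essentially nothing. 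Your ``localization phase'' therefore does not terminate with a region containing $O(k)$ candidates, and the subsequent ``resolution phase'' never begins.

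The suggestion to reselect beacons inside the current candidate region does not repair this: even if $w$ lies in a region of small Euclidean diameter, the returned value $d_G(r,w)$ can still be arbitrarily large, and the robber drifts by Euclidean distance up to $2$ per round, so nothing forces convergence. Absent a genuinely new idea relating graph-metric information to Euclidean position in unit disk graphs of bounded clique number, the argument remains a heuristic outline rather than a proof.
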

%It is known that for general disk graphs on the plane $c(G)\leq 9$ (see\cite%{BeverigeDudek}).


\begin{thebibliography}{9}
\bibitem{AignerFromm} M. Aigner and M. Fromme, A game of cops and robbers,
Discrete Appl. Math. 8, pp. 1--12 (1984).

%\bibitem{Alon} N. Alon, A. Mehrabian, Chasing a Fast Robber on Planar Graphs
%and Random Graphs, Journal of Graph Theory 78 (2015) 81--96.

%\bibitem{Babette} Babette de Fluiter, Algorithms for Graph of Small
%Treewidth, France, 1997.

\bibitem{Bahl} P. Bahl, V.N. Padmanabhan, \textquotedblleft RADAR: an
in-building RF-based user location and tracking system,\textquotedblright\
INFOCOM 2000, Nineteenth Annual Joint Conference of the IEEE Computer and
Communications Societies Proceedings, IEEE 2, pp. 775--784 (2000).

%\bibitem{BeverigeDudek} A. Beveridge, A. Dudek, A. Frieze and T. M\"{u}ller, Cops and robbers on geometric graphs, Combinatorics, Probability and Computing 21 (2012) 816--834.

\bibitem{Bienstock} D. Bienstock, Graph searching, path-width, tree-width and related problems (a survey). DIMACS Ser. Discret. Math. Theor. Comput. Sci. 5, pp. 33–49 (1991)

%\bibitem{Bollobas} B. Bollob\'{a}s, G. Kun, I. Leader, Cops and robbers in a random graph, J. Combin. Theory
%Ser. B 103 (2013) 226--236.

\bibitem{BonatoNowakowski} A. Bonato and R.J. Nowakowski, The game of cops
and robbers on graphs, Student Mathematical Library, vol. 61, American
Mathematical Society (2011).



\bibitem{Brandt} A. Brandt, J. Diemunsch, C. Erbes, J. LeGrand and C. Moffatt,
Locating a robber on a tree, manuscript (2015).

%\bibitem{MD_product} J.~C\'{a}ceres, C.~Hernando, M.~Mora, I.~M.~Pelayo, M.~L.~Puertas, C.~Seara, D.~R.~Wood, On the Metric Dimension of Cartesian Products of Graphs, SIAM Journal of Discrete Mathematics 21(2), 423-441 (2007).


\bibitem{WestTCS} J. Carraher, I. Choi, M. Delcourt, L. H. Erickson, D. B.
West, Locating a robber on a graph via distance queries, Theoretical
Computer Science 463, pp. 54--61  (2012).

\bibitem{Charon} I. Charon, O. Hudry and A. Lobstein, Minimizing the Size of an Identifying or Locating-Dominating Code in a Graph is NP-Hard. Theoretical Computer Science, Vol. 290/3, pp. 2109-2120 (2003)

\bibitem{Cohen} G. Cohen, I. Honkala, A. Lobstein and G. Z\'emor, On Identifying Codes. Proceedings of DIMACS Workshop on Codes and Association Schemes '99, Vol. 56, pp. 97-109 (2001)

\bibitem{Fomin} F. V. Fomin and D. M. Thilikos, An annotated bibliography on guaranteed graph searching, Theor. Computer Science 399, pp. 236--245 (2008).

%{Frieze} A. Frieze, M. Krivelevich, P. Loh, Variations on Cops and
%Robbers, Journal of Graph Theory 69, 383--402.

\bibitem{Gu} Y. Gu, A. Lo, I. Niemegeers, A Survey of Indoor Positioning
Systems for Wireless Personal Networks, IEEE Communications Surveys \&
Tutorials, No. 1, pp. 13--32 (2009).

\bibitem{Harary} F. Harary, R. A. Melter, On the metric dimension of a
graph, Ars Combin. 2, pp. 191--195 (1976).

\bibitem{Haslegrave} J. Haslegrave, R.A.B. Johnson and S. Koch, Locating a robber with multiple probes: arXiv:1703.06482 (2017).

\bibitem{Johnson} R.A.B. Johnson and S. Koch, Subgraphs and Colourability of Locatable Graphs: arXiv:1402.2969.

%\bibitem{Heggernes} P.Heggernes, Treewidth, partial k-trees, and chordal
%graphs, unpublished, 2006.

%\bibitem{Lindstrom}  B.~Lindstr\"{o}m, On a combinatory detection problem I,  Publications of the Mathematical Institute of the Hungarian Academy of Sciences, 9(1964) 195-–207.

%\bibitem{Luczak} T. \L uczak, P. Pra\l at, Chasing robbers on random graphs:
%zigzag theorem, Random Structures and Algorithms 37 (2010) 516--524.

\bibitem{NowakowskiWinkler} R. J. Nowakowski, P. Winkler, Vertex-to-vertex
pursuit in a graph, Discrete Mathematics 43, pp. 235--239 (1983).

\bibitem{Olsen} R. Olsen, J. Figueiras, J. Rasmussen, H. Schwefel, How
precise should localization be? A quantitative analysis of the impact of the
delay and mobility on reliability of location information. In: Proceedings
of IEEE GLOBECOM, pp. 1--6 (2010).

\bibitem{Quillot} A. Quillot. Jeux et pointes xes sur les graphes. Th ese de
3 eme cycle, Universit e de Paris VI, pp. 131--145 (1978).

\bibitem{Seager1} S. Seager, Locating a robber on a graph, Discrete Math.
312, pp. 3265--3269 (2012).

\bibitem{Seager2} S. Seager, Locating a backtracking robber on a tree,
Theor. Computer Science 539, pp. 28--37 (2014).

\bibitem{Seymour} P. D. Seymour, R. Thomas, Graph searching and a min--max
theorem for tree-width, J. Combin. Theory Ser. B 58, pp. 22--33 (1993).

\bibitem{Slater} P. J. Slater, Leaves of trees, in Proc. Sixth Southeastern
Conf. Combin., Graph Theory, Computing, Congressus Numer. 14, pp. 549--559 (1975).

%\bibitem{Soifer} A. Soifer, The mathematical coloring book, Springer, 2009.
\end{thebibliography}
\end{document}